\numberwithin{equation}{section}
\newtheorem{Thm}[equation]{Theorem}
\newtheorem{Prop}[equation]{Proposition}
\newtheorem{Cor}[equation]{Corollary}
\newtheorem{Lem}[equation]{Lemma}
\theoremstyle{definition}
\newtheorem{Rmk}[equation]{Remark}
\begin{document}

\title [Rank $2$ hyperbolic Kac-Moody algebras and Hilbert modular forms]
{Rank $2$ symmetric hyperbolic Kac-Moody algebras and Hilbert modular forms}
\author[Henry Kim]{Henry H. Kim$^{\star}$}
\thanks{$^{\star}$ partially supported by an NSERC grant.}
\address{Department of
Mathematics, University of Toronto, Toronto, ON M5S 2E4, CANADA and Korea Institute for Advanced Study, Seoul, Korea}
\email{henrykim@math.toronto.edu}
\author[Kyu-Hwan Lee]{Kyu-Hwan Lee}
\address{Department of
Mathematics, University of Connecticut, Storrs, CT 06269, U.S.A. and Korea Institute for Advanced Study, Seoul, Korea}
\email{khlee@math.uconn.edu}
\subjclass[2000]{Primary 17B67; Secondary 11F22, 11F41}
\begin{abstract}
In this paper we study rank two symmetric hyperbolic Kac-Moody algebras $\mathcal H(a)$ with the Cartan matrices $\begin{pmatrix} 2 & -a \\ -a & 2 \end{pmatrix}$, $a \ge 3$ and their automorphic correction in terms of Hilbert modular forms.
We associate a family of $\mathcal H(a)$'s to the quadratic field $\mathbb Q(\sqrt p)$ for each odd prime $p$ and show that there exists a chain of embeddings in each family.
When $p=5, 13, 17$, we show that the first $\mathcal H (a)$ in each family, i.e. $\mathcal H(3), \mathcal H(11), \mathcal H(66)$, is contained in a generalized Kac-Moody superalgebra whose denominator function is a Hilbert modular form given by a Borcherds product. Hence, our results provide automorphic correction for those $\mathcal H(a)$'s. We also compute asymptotic formulas for the root multiplicities of the generalized Kac-Moody superalgebras using the fact that the exponents in the Borcherds products are Fourier coefficients of weakly holomorphic modular forms of weight $0$.
\end{abstract}

\maketitle

\section*{Introduction}

Rank two symmetric Kac-Moody algebras $\mathcal H(a)$ are the Lie algebras with a Cartan matrix of the form $\begin{pmatrix} 2 & -a \\ -a &2 \end{pmatrix}$, $a\ge 1$. When $a=1$, the Lie algebra $\mathcal H(1)$ is nothing but $\frak {sl}_3(\mathbb C)$; when $a=2$, we obtain (a central extension of) the affine Lie algebra $\widehat{\frak{sl}_2}(\mathbb C)$. These Lie algebras are fundamental objects and their structures and representations are quite well-known. Surprisingly enough, when $a \ge 3$, we still do not know much about the Lie algebra $\mathcal H(a)$. What makes one intrigued is that there seem to be hidden connections of these algebras $\mathcal H (a)$ to automorphic forms.

Lepowsky and Moody \cite{LM} showed that there are remarkable connections between root systems of rank two (not necessarily symmetric) hyperbolic Kac-Moody algebras and quasi-regular cusps on Hilbert modular surfaces attached to certain quadratic fields. A. Feingold studied the algebra $\mathcal H(3)$ and described the root system of $\mathcal H(3)$ in terms of Fibonacci numbers \cite{Fein}. Kang and Melville extended this result to $\mathcal H(a)$, $a \ge 3$, using generalized Fibonacci numbers \cite{KaMe}. Furthermore, in the same paper, they studied root multiplicities of $\mathcal H(a)$, making use of Kang's formula for root multiplicities of Kac-Moody algebras \cite{Kang-94}. In 2004, Feingold and Nicolai showed that the algebras $\mathcal H (a)$ can be embedded into the rank three hyperbolic Kac-Moody algebra $\mathcal F$ associated with the Cartan matrix $\begin{pmatrix} 2 & -2 & 0 \\ -2 &2 &-1 \\0 &-1 &2 \end{pmatrix}$.

The Lie algebra $\mathcal F$ has connections to a Siegel modular form as shown in Feingold-Frenkel \cite{FF} and 
Gritsenko-Nikulin \cite{GN-96}. In particular, Gritsenko and Nikulin showed that the hyperbolic Kac-Moody algebra $\mathcal F$ is contained in a generalized Kac-Moody algebra $\mathcal G$ whose denominator function is a Siegel modular form, and called the generalized Kac-Moody algebra $\mathcal G$ an automorphic correction of $\mathcal F$. The notion of automorphic correction was originated from Borcherds' work \cite{Bor-92} on Monster Lie algebras. See Section \ref{correction} for the precise definition of automorphic correction due to Gritsenko and Nikulin \cite{GN-02}. 

The purpose of this paper is to investigate connections of the hyperbolic Kac-Moody algebras $\mathcal H(a)$, $a \ge 3$, to Hilbert modular forms from the point of view of automorphic correction. For each odd prime $p$, let $F=\Bbb Q(\sqrt{p})$, and to each positive solution of the Pell's equation $a^2-p s^2=4$, we associate a family of $\mathcal H(a)$'s. In section 5.1, we show that there exists a chain of embeddings in each family (Theorem \ref{thm-embedding}).

In particular, we consider three infinite families of $\mathcal H(a)$'s attached to the quadratic fields $\mathbb Q(\sqrt p)$, 
$p \in \{ 5, 13, 17 \}$, respectively. These three primes are the only primes for which there exists the unique weakly holomorphic modular form 
$f_m\in A_0^+(p,\chi_p)$ (See Section 4.1 for the definition of the space $A_0^+(p, \chi_p)$)
with the principal part $s(m)^{-1}q^{-m}$ for each $m\geq 1$, where 
$s(m)=\begin{cases} 1, &\text{if $p\nmid m$},\\ 2, &\text{if $p|m$}.\end{cases}$ 

Consider the first $\mathcal H(a)$ in each family, namely, 
$\mathcal H(3), \mathcal H(11), \mathcal H(66)$. In section 5.2, we show that there exists a generalized Kac-Moody superalgebra 
$\widetilde{\mathcal H}$ for each of these $\mathcal H(a)$'s, which contains the $\mathcal H (a)$ as a subalgebra, and whose denominator function is a Hilbert modular form $\Phi_1(z)$ for $\mathbb Q(\sqrt{p})$ (Theorem \ref{main}). Here the fact that $\Phi_1(z)$ is an infinite product, so-called Borcherds product, is crucial. Borcherds \cite{Bor-98} studied certain  lifts of weight $0$ weakly holomorphic modular forms to modular forms on orthogonal groups $O(2,2)$. Bruinier and Bundschuh \cite{BB} made explicit the correspondence between modular forms on $O(2,2)$ and Hilbert modular forms for $\Bbb Q(\sqrt{p})$, $p\equiv 1$ (mod 4). We use their explicit correspondence in showing that $\Phi_1(z)$ is indeed the automorphic correction of the denominator function of $\mathcal H(a)$. 
For $p=13, 17$, we also use the explicit calculation of Mayer \cite{M}. 

Let $f_m(z)=s(m)^{-1}q^{-m}+\sum_{n=0}^\infty a_m(n)q^n$. It is known \cite{BB} that $a_m(n)$ are rational numbers with bounded denominators. When $p=5,13$, more is true. Indeed we verify that $s(n)a_m(n)$ are integers for all $n$ (Lemma \ref{lem-513}). If $p=17$, it is likely that they are integers, but we were not able to verify it. We assume that they are integers. It is necessary since they are root multiplicities of the generalized Kac-Moody superalgebra 
$\widetilde {\mathcal H}$. 

In section 6, we apply the method of Hardy-Ramanujan-Rademacher \cite {L1} to calculate the asymptotics of the Fourier coefficients $a_m(n)$ (Theorem \ref{last}). In that way, we obtain information on the root multiplicities of $\widetilde {\mathcal H}$.

It is expected that our method can be applied to more general (not necessarily symmetric) rank two hyperbolic Kac-Moody algebras. We will consider these general cases in a subsequent paper.

\subsection*{Acknowledgments} We would like to thank V. Gritsenko for helpful discussions.

\vskip 1cm

\section{Rank Two Symmetric Hyperbolic Kac-Moody Algebras} \label{hyperbolic}

In this section we fix our notations for hyperbolic Kac-Moody algebras. A general theory of Kac-Moody algebras can be found 
in \cite{Kac}, and the rank two hyperbolic case was studied by Lepowsky and Moody \cite{LM}, Feingold \cite{Fein}, and Kang and Melville \cite{KaMe}.

Let $A=\begin{pmatrix} 2 & -a \\ -a & 2 \end{pmatrix}$ be a generalized Cartan matrix with $a \ge 3$, and  $\mathcal H(a)$ be the hyperbolic Kac-Moody algebra associated with the matrix $A$. In this section, we write $\mathfrak g = \mathcal H(a)$ if there is no need to specify $a$. Let $\{ h_1, h_2 \}$ be the set of simple coroots in the Cartan subalgebra $\mathfrak h = \mathbb C h_1 + \mathbb C h_2 \subset \mathfrak g$. Let $\{ \alpha_1 , \alpha_2 \} \subset \mathfrak h^*$ be the set of simple roots, and $Q=\mathbb Z \alpha_1 + \mathbb Z \alpha_2$ be the root lattice, and define $\mathfrak h^*_{\mathbb Q} =\mathbb Q \alpha_1 + \mathbb Q \alpha_2$ and $\mathfrak h^*_{\mathbb R} =\mathbb R \alpha_1 + \mathbb R \alpha_2$.  The set of roots of $\mathfrak g$ will be denoted by $\Delta$, and the set of positive (resp. negative) roots by $\Delta^+$ (resp. by $\Delta^-$), and the set of real (resp. imaginary) roots by $\Delta_{\mathrm{re}}$ (resp. by $\Delta_{\mathrm{im}}$). We will use the notation $\Delta^+_{\mathrm{re}}$ to denote the set of positive real roots. Similarly, we use $\Delta^+_{\mathrm{im}}$, $\Delta^-_{\mathrm{re}}$ and $\Delta^-_{\mathrm{im}}$.

We assume that $a^2-4=ps^2$ for some $s \in \mathbb N$ and an odd prime $p$, and let $F=\mathbb Q(\sqrt p)$. We denote by $\bar x$ the conjugate of $x \in F$ and write $N$ and $\mathrm{tr}$ for the norm and trace of $F$.
 We denote the ring of integers of $F$ by $\mathcal O$. By considering the Pell's equation
\begin{equation} \label{Pell} a^2 -ps^2 =4 ,\end{equation} we obtain infinitely many pairs $(a,s)$ for each $p$.
We set \[ \eta = \frac {a + \sqrt {a^2-4}} 2 = \frac {a+s\sqrt p} 2 .\] Then we have $\bar \eta = \eta^{-1}$ and $1+\eta^2=a \eta$.
If $p \equiv 1 \, (\mathrm{mod}\ 4)$, we fix a fundamental unit $\varepsilon_0$ of $F$ so that $\eta = \varepsilon_0^{2k}$ for some $k \in \mathbb N$. In this case $N(\varepsilon_0)=-1$ and $N(\eta)=1$. If $p \equiv 3 \, (\mathrm{mod}\ 4)$, we fix a fundamental unit $\varepsilon_0$ of $F$ so that $\eta = \varepsilon_0^{k}$ for some $k \in \mathbb N$. In this case $N(\varepsilon_0)=1$. For example, if $p=5$ then the smallest positive solution of the Pell's equation is $(a,s)=(3,1)$, if $p=13$ then $(a,s)=(11,3)$, and if $p=17$ then $(a,s)=(66,16)$, and we choose a fundamental unit $\varepsilon_0$ of $\mathcal O$ as follows:
\[ \varepsilon_0 = \frac {1+\sqrt 5} 2 \ \text{ for }p=5; \qquad \varepsilon_0= \frac {3+\sqrt {13}} 2 \ \text{ for }p=13; \qquad \varepsilon_0= 4+\sqrt {17} \ \text{ for }p=17.\]

The simple reflection corresponding to $\alpha_i$ in the root system of $\mathfrak g$ is denote by $r_i$ ($i=1, 2$), and the Weyl group by $W$. The eigenvalues of $r_1r_2$ as a linear transformation on $\mathfrak h^*$ are $\eta^2$ and $\eta^{-2}$. Let $\gamma^+$ be an eigenvector for $\eta^2$ and we set $\gamma^- =r_2 \gamma^+$. Then $\gamma^-$ is an eigenvector for $\bar \eta^{2}$. Specifically, we choose
\[ \gamma^+ = \frac {\alpha_1 + \bar \eta \alpha_2} s \quad \text{ and } \quad \gamma^- = \frac {\alpha_1 + \eta \alpha_2} s .\]
We define a symmetric bilinear form $(\cdot, \cdot)$ on $\mathfrak h^*$ to be given by the Cartan matrix $A$ with respect to $\{ \alpha_1, \alpha_2 \}$. Then we have $(\gamma^+, \gamma^+)=(\gamma^-, \gamma^-)=0$ and $(\gamma^+, \gamma^-)=-p$.

We will use the column vector notation for the elements in $\mathfrak h^*$ with respect to the basis $\{\gamma^+, \gamma^- \}$, i.e. we write $\binom  x y $ for $x \gamma^+ + y \gamma^-$. Then we have
\[ \alpha_1 = \frac 1 {\sqrt{p}} \begin{pmatrix} \eta \\ - \bar \eta \end{pmatrix} \quad \text{ and } \quad \alpha_2= \frac 1 {\sqrt{p}}  \begin{pmatrix} - 1 \\  1 \end{pmatrix} . \] It is now easy to see that $\mathfrak h^*_{\mathbb Q}= \{ \binom x {\bar x} \, | \, x \in F\}$.
 A symmetric bilinear form $\langle \cdot , \cdot \rangle$ on $F$ is defined by $\langle x, y \rangle= - p \, \mathrm{tr}(xy')$. We define a map $\psi : \mathfrak h^*_{\mathbb Q} \rightarrow F$ by $\binom  x {\bar x}  \mapsto x$. Then the map $\psi$ is an isometry from $(\mathfrak h^*_{\mathbb Q}, (\cdot, \cdot))$ to $(F, \langle \cdot, \cdot \rangle)$.
 In particular, the root lattice $Q=\mathbb Z \alpha_1 + \mathbb Z \alpha_2$ is mapped onto a sublattice of $\mathcal O/\sqrt p$. When $p \equiv 1$ (mod $4$), the inverse different $\frak d^{-1}$ is equal to $\mathcal O/{\sqrt p}$, and we have $s \frak d^{-1} \subset \psi (Q) \subset \frak d^{-1}$, and the dual lattice $(\frak d^{-1})'$ of $\frak d^{-1}$ is $\frac 1 p \mathcal O$.

Let $\omega_i$ $(i=1,2)$ be the fundamental weights of $\mathfrak g$. Then we have $\omega_1 = \frac 1 {4-a^2} ( 2 \alpha_1 + a \alpha_2)$ and $\omega_2 = \frac 1 {4-a^2}(a \alpha_1 + 2 \alpha_2)$. In the column vector notation,
\[ \omega_1 = \frac {-1} {sp} \begin{pmatrix} 1 \\ 1 \end{pmatrix} \quad \text{ and } \quad \omega_2= \frac {-1} {sp}  \begin{pmatrix} \eta \\ \bar \eta \end{pmatrix} . \] We define \[ \rho := - (\omega_1 + \omega_2) = \frac 1 {sp} \begin{pmatrix} 1+\eta \\ 1 + \bar \eta \end{pmatrix} . \]

 The simple reflections have the matrix representations
\[ r_1 = \begin{pmatrix} 0 &\eta^2 \\ \bar \eta^2 & 0 \end{pmatrix} \quad \text{ and } \quad r_2 = \begin{pmatrix} 0 & 1 \\ 1 & 0 \end{pmatrix} . \] The Weyl group $W$ also acts on $F$ by \[ r_1 x= \eta^2 \bar x \quad \text{ and } \quad r_2 x = \bar x \qquad \text{ for } x \in F,\] so that the isometry $\psi$ is $W$-equivariant.
Since $W=\{ (r_1r_2)^i, r_2(r_1r_2)^i \, | \, i \in \mathbb Z \}$, we can calculate the set of positive real roots and obtain
\[ \Delta^+_{\mathrm{re}} = \left \{ \frac 1 {\sqrt{p}} \begin{pmatrix}  \eta^{j} \\ - \bar \eta^{j} \end{pmatrix} (j > 0),  \qquad
\frac 1 {\sqrt{p}} \begin{pmatrix}  - \bar \eta^{j} \\  \eta^{j} \end{pmatrix} (j \ge 0)  \right \} . \] The set of imaginary roots is described in \cite{Fein, KaMe}. We present it using our notations: First, we define the set
\[ \Omega_k = \left \{ (m,n) \in \mathbb Z_{\ge 0} \times \mathbb Z_{\ge 0} : \sqrt{\frac {4 k}{a^2 -4} } \le m \le \sqrt {\frac k {a-2} },\ n = \frac { am-\sqrt{(a^2-4)m^2 -4k}} 2 \right \} \]  for $k \ge 1$.
Note that we have only to present $\psi(\Delta^+_{\mathrm{im}})$. The set is given by
\begin{equation} \label{imroot} \psi(\Delta^+_{\mathrm{im}}) = \left \{  \frac 1 {\sqrt{p}} \, \eta^{j} (m \eta -n) ,  \
\frac 1 {\sqrt{p}}  \, \eta^{j}(n \eta -m) , \
\frac 1 {\sqrt{p}} \, \bar \eta^{j}(n - m \bar \eta ) , \
\frac 1 {\sqrt{p}} \,  \bar \eta^{j}(m  - n \bar \eta) \right \} , \end{equation}
where $j \ge 0$ and $(m,n) \in \Omega_k$ for $k \ge 1$.

\vskip 1 cm

\section{Modular forms on $O(2,2)$ as Hilbert modular forms} \label{O22}

In this section, we review the result of \cite{B} on the correspondence between Hilbert modular forms and modular forms on $O(2,2)$ in a special case after we consider the general case of modular forms on $O(n,2)$.

\subsection{Modular forms on $O(n,2)$}

Let $(V,Q)$ be a non-degenerate quadratic space over $\mathbb Q$ of type $(n,2)$.
Let $V(\Bbb C)$ be the complexification of $V$ and $P(V(\Bbb C))=(V(\Bbb C)-\{0\})/\Bbb C^*$ be the corresponding projective space.
Let $\mathcal{K}^+$ be a connected component of
\begin{equation} \label{zz} \mathcal{K}=\{ [Z]\in P(V(\Bbb C)) : (Z,Z)=0,\, (Z,\bar Z)<0\},
\end{equation}
and let $O^+_V(\mathbb R)$ be the subgroup of elements in $O_V(\mathbb R)$ which preserve the components of $\mathcal K$.

For $Z\in V(\Bbb C)$, write $Z=X+iY$ with $X,Y\in V(\Bbb R)$.
Given an even lattice $L\subset V$, let $\Gamma \subseteq O^+_L:=O_L\cap O_V^+(\Bbb R) $ be a subgroup of finite index. Then $\Gamma$ acts on $\mathcal{K}$ discontinuously.
Let
$$\widetilde{\mathcal{K}}^+=\{ Z\in V(\Bbb C)-\{0\} : [Z]\in \mathcal{K}^+\}.
$$

Let $k\in  \frac 1 2 \mathbb Z$, and $\chi$ be a multiplier system of $\Gamma$. Then a meromorphic function $\Phi: \tilde{\mathcal{K}}^+\longrightarrow \Bbb C$ is called a {\em meromorphic modular form} of weight $k$ and
multiplier system $\chi$ for the group $\Gamma$, if
\begin{enumerate}
\item $\Phi$ is homogeneous of degree $-k$, i.e., $\Phi(cZ)=c^{-k}\Phi(Z)$ for all $c\in\Bbb C-\{0\}$,

\item $\Phi$ is invariant under $\Gamma$, i.e., $\Phi(\gamma Z)=\chi(\gamma)\Phi(Z)$ for all $\gamma\in \Gamma$.
\end{enumerate}
This definition agrees with the one given in \cite{GN-02}.

\subsection{Hilbert modular forms on quadratic number fields} \label{subsec-H}

For a prime $p \equiv 1$ (mod $4$), let $F=\Bbb Q[\sqrt{p}]$, and let $\mathcal{O}_F$ and $\frak{d}_F$ be the ring of integers and the different of $F$, respectively. We denote by $\bar x$ the conjugation of $x$ in $F$.
We set $\Gamma_F=SL_2(\mathcal{O}_F)$. Assume that $\Gamma \subseteq \Gamma_F$ is a subgroup of finite index. Let $\mathbb H$ be the upper-half plane and $\chi$ be a multiplier system of $\Gamma$.
A meromorphic function $f: \Bbb H^2\longrightarrow \Bbb C$ is called a {\em Hilbert modular form} of weight $k$ for $\Gamma$ if
$$f(\gamma z)=\chi(\gamma)N(cz+d)^k f(z),
$$
for $\gamma=\begin{pmatrix} a&b\\ c&d\end{pmatrix}\in\Gamma$, and $N(cz+d)=(cz_1+d)(\bar cz_2+\bar d)$ for $z=(z_1,z_2) \in \mathbb H^2$.

Consider the $\mathbb Q$-vector space $V=\Bbb Q\oplus \Bbb Q\oplus F$, and define a quadratic form $Q$ on $V$ by \[ Q(a,b,\nu)=-p \, (\nu \bar \nu+ab) \] and a bilinear form $B$ so that $B((a, b, \nu), (a,b, \nu))=2Q(a,b,\nu)$. Then
$(V,Q)$ is a quadratic space of type $(2,2)$.
We will consider the lattice $L=\Bbb Z\oplus\Bbb Z\oplus {\frak d}^{-1}_F$ in $V$.

Let
$$\tilde V=\{X\in M_2(F): X^t=\bar X \}= \left \{ \begin{pmatrix} a& \nu \\ \bar \nu&b\end{pmatrix}: \, a,b\in\Bbb Q, \ \nu\in F \right \}.
$$
Then $\tilde V$ is a rational quadratic space with the quadratic form $\tilde Q(X)=p \, \det(X)$. The corresponding bilinear form is $\tilde B(X_1,X_2)=p \, \mathrm{tr}(X_1 X_2^*)$,
where $X^*=\begin{pmatrix} d& -b \\ -c& a\end{pmatrix}$ for $X=\begin{pmatrix} a&b \\ c&d\end{pmatrix}$.
Here $SL_2(F)$ acts on $\tilde V$ by $X\mapsto g.X=gX{\bar g}^t$ for $X \in \tilde V$ and $g \in SL_2(F)$.
Then $\tilde V$ and $V$ are isometric with the isometry given by
\begin{equation} \label{eqn-metry} \tilde V\longrightarrow V,\quad \begin{pmatrix} a& \nu \\ \bar \nu&b\end{pmatrix}\mapsto \left (-a,b, \nu \right ).
\end{equation}
Under the isomorphism, we have
\begin{equation} \label{eqn-latt} L=\Bbb Z\oplus\Bbb Z\oplus\frak{d}_F^{-1}\simeq  \left \{ \begin{pmatrix} a& \nu \\ \bar \nu&b\end{pmatrix}\in \tilde V: a,b\in\Bbb Z, \ \nu\in \mathfrak d_F^{-1} \right \}.
\end{equation}
Note that the dual lattice $L'$ is given by
\[ L' = \tfrac 1 p \mathbb Z \oplus \tfrac 1 p \Bbb Z \oplus \tfrac 1 p \mathcal O. \]

The two real embeddings $F\rightarrow \Bbb R^2$,  $x\mapsto (x,\bar x)$, induces an embedding $\tilde V\mapsto M_2(\Bbb R)$. Thus we have
$\tilde V(\Bbb C)=M_2(\Bbb C)$, and let
$$\mathcal{K}=\{ [Z]\in P(M_2(\Bbb C)) : \det(Z)=0,\, \mathrm{tr}(Z {\bar Z}^*)<0\}.
$$
We write $M(z)=\begin{pmatrix} z_1z_2&z_1\\z_2&1\end{pmatrix}\in M_2(\Bbb C)$ for $z=(z_1, z_2) \in \mathbb C^2$. Note that $[M(z)]\in\mathcal{K}$ if and only if $Im(z_1)Im(z_2)>0$. Let $\mathcal{K}^+$ be a connected component of $\mathcal{K}$. Then
$\Bbb H^2\rightarrow \mathcal{K}^+$, $z=(z_1,z_2)\mapsto [M(z)]$, is a biholomorphic map.
For $\gamma=\begin{pmatrix} a&b\\ c&d\end{pmatrix}\in SL_2(F)$, we have
$$\gamma M(z)=N(cz+d) M(\gamma z).
$$
One can easily see that $\Gamma_F \subset O^+_L$.
Therefore, modular forms of weight $k$ on $O(2,2)$  can be considered as Hilbert modular forms of weight $k$.

\vskip 1 cm

\section{Automorphic Correction} \label{correction}

In this section, we recall the theory of automorphic correction established by Gritsenko and Nikulin \cite{GN-96,GN-97,GN-02}. The original idea of automorphic correction can be traced back to Borcherds' work \cite{Bor-92}.

We assume that the following data (1)-(4) are given.
\begin{enumerate}
\item We are given a lattice $M$ with a non-degenerate integral symmetric bilinear form $(\cdot, \cdot)$ of signature $(n,1)$ for some $n \in \mathbb N$.

\item A nontrivial reflection group $W \subset O(M)$ is given. The group $W$ is generated by reflections in some roots of the lattice $M$. A vector $\alpha \in M$ is called a root if $(\alpha, \alpha) >0$ and $(\alpha, \alpha)$ divides $2(\alpha , \beta)$ for all $\beta \in M$.

\item Consider the cone \[ V(M)= \{ \beta \in M \otimes \mathbb R \, | \, (\beta,\beta) <0 \} ,\] which is a union of two half cones. One of these half cones is denoted by $V^+(M)$. Choose a minimal set $\Pi$ of roots orthogonal to a fundamental chamber $\mathcal M \subset V^+(M)$ of $W$ so that \[ \mathcal M = \{ \beta \in V^+(M) \, | \, (\beta, \alpha) \le 0  \text{ for all } \alpha \in \Pi \} .\] Moreover, we have a Weyl vector $\rho \in M \otimes \mathbb Q$ satisfying $(\rho , \alpha) = - (\alpha, \alpha)/2$ for each $\alpha \in \Pi$.

\item Define the complexified cone $\Omega(V^+(M))= M \otimes \mathbb R + i V^+(M)$. Let $L=\begin{pmatrix} 0 & -m \\ -m & 0 \end{pmatrix} \oplus M$ be an extended lattice for some $m \in \mathbb N$. We consider the quadratic space $V=L \otimes \mathbb Q$ and obtain $\mathcal K^+$ as in \eqref{zz}. Define a map $\Omega(V^+(M)) \rightarrow \mathcal K$ by $z \mapsto \left [ \frac {(z, z)} {2m} \, e_1 +  e_2 + z \right ]$, where $\{ e_1, e_2 \}$ is the basis for $\begin{pmatrix} 0 & -m \\ -m & 0 \end{pmatrix}$. Then the space $\mathcal K^+$ is canonically identified with $\Omega(V^+(M))$.
We are given a holomorphic automorphic form $\Phi(z)$ on $\Omega(V^+(M))$ with respect to a subgroup $\Gamma \subset O^+_L$ of finite index.
    The automorphic form $\Phi$ has a Fourier expansion of the form
     \[ \Phi(z) = \sum_{w \in W} \det (w) \left ( e \left ( -  (w (\rho), z) \right )- \sum_{a \in M \cap
     \mathcal M} m(a) \, e(- (w(\rho+a), z)) \right ), \] where  $e(x)=e^{2 \pi i x}$ and $m(a) \in \mathbb Z$ for
     all $a \in M \cap \mathcal M$.
\end{enumerate}

 The matrix
    \[ A= \begin{pmatrix} \frac {2(\alpha, \alpha')}{(\alpha, \alpha)} \end{pmatrix}_{\alpha, \alpha' \in \Pi} \]
defines a Kac-Moody algebra $\mathfrak g$. Moreover, the data (1)-(4) define a generalized Kac-Moody superalgebra $\mathcal G$ as in \cite{GN-02}. We call $\mathcal G$ (or $\Phi(z)$) an {\em automorphic correction} of $\mathfrak g$.
 The automorphic form $\Phi(z)$ determines the set of simple imaginary roots of $\mathcal G$, and can be written, using the denominator identity for the generalized Kac-Moody superalgebra $\mathcal G$, as the product
 \[ \Phi(z)= e(-(\rho, z)) \prod_{\alpha \in \Delta(\mathcal G)^+} (1 - e(-(\alpha, z)))^{\mathrm{mult}(\mathcal G, \alpha)} ,\] where $\Delta(\mathcal G)^+$ is the set of positive roots of $\mathcal G$ and $\mathrm{mult}(\mathcal G, \alpha)$ is the root multiplicity of $\alpha$ in $\mathcal G$.
  In Section \ref{H-correct}, we will construct automorphic corrections of the hyperbolic Kac-Moody algebras $\mathcal H(3)$, $\mathcal H(11)$, $\mathcal H(66)$ which are associated with the quadratic fields $\mathbb Q(\sqrt p)$, $p=5, 13, 17$, respectively.

\vskip 1 cm

\section{Hilbert Modular Forms as Borcherds Products} \label{Hilbert}

In this section we summarize the results of \cite{BB}. (cf. \cite{M})
\subsection{Weakly holomorphic modular forms of weight 0} \label{weakly}
Let $p$ be an odd prime and $A_k^+(p,\chi_p)$ (resp. $A_k^-(p, \chi_p)$) be the space of weakly holomorphic modular forms $f$ of weight $k$ for the group $\Gamma_0(p)$ with character $\chi_p$ such that
$a(n)=0$ if $\chi_p(n)=-1$ (resp. $\chi_p(n)=1$) , where
$f=\sum_{n\in\Bbb Z} a(n)q^n$ and $\chi_p(n)=(\frac np)$. We denote by $S_k^+(p, \chi_p)$ (resp. $S_k^-(p, \chi_p)$) the subspace of cusp forms.
For an integer $n$, define $s(n)=\begin{cases} 2 &\text{if $p|n$}, \\ 1  &\text{otherwise.}\end{cases}$

\begin{Thm} \cite[Theorem 6]{BB}There exists a weakly holomorphic modular form $f\in A_0^+(p,\chi_p)$ with prescribed principal part $\sum_{n<0} a(n)q^n$  if and only if
$\sum_{n<0} s(n)a(n)b(-n)=0$ for every cusp form $g=\sum_{m>0} b(m)q^m\in S_2^{\delta}(p,\chi_p)$, where $\delta=\chi_p(-1)$.
\end{Thm}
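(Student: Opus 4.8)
The plan is to deduce this from Borcherds' general duality principle---that the obstruction to prescribing the principal part of a weight-$0$ weakly holomorphic modular form is detected precisely by cusp forms of the complementary weight $2$---and then to render that principle in the scalar plus-space language of $A_0^+(p,\chi_p)$ and $S_2^\delta(p,\chi_p)$. The first step is to recall, following \cite{BB}, that these two scalar spaces are, via an Eichler--Zagier/Kohnen-type isomorphism available because $p$ is prime, the plus-space pieces of the spaces of vector-valued modular forms of weights $0$ and $2$ for the Weil representation $\rho$ relevant to $\mathbb Q(\sqrt p)$ and its dual. Under this dictionary a Fourier coefficient $a(n)$ with $p\nmid n$ corresponds to a quantity sitting at the two discriminant-group indices $\pm\gamma_n$, whereas $a(n)$ with $p\mid n$ corresponds to the single index $0$; this $\pm$-identification is exactly what produces the weights $s(n)$ in the pairing.

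For necessity, suppose $f=\sum a(n)q^n\in A_0^+(p,\chi_p)$ corresponds to a vector-valued form $F$ and $g=\sum b(m)q^m\in S_2^\delta(p,\chi_p)$ corresponds to $G$. The natural pairing of $F$ against $G$ is a scalar weakly holomorphic modular form of weight $2$ with trivial character for $\mathrm{SL}_2(\mathbb Z)$, hence a meromorphic differential on $X(1)$ that is holomorphic on the interior, so the residue theorem forces its constant term to vanish. Because $G$ is cuspidal, the only contributions to that constant term come from the principal part of $F$, and translating the relevant components back into scalar coordinates through the dictionary above turns the vanishing of the constant term into $\sum_{n<0}s(n)a(n)b(-n)=0$. (Equivalently, one can argue directly on $X_0(p)$ with the differential $fg\,\tfrac{dq}{q}$: its residues at the two cusps $0$ and $\infty$---computed using that the two Fourier expansions of a plus-space form are proportional, with proportionality constants involving the width $p$ and the character $\chi_p$---combine to give the same identity.)

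For sufficiency one must show the converse: an admissible principal part $P=\sum_{n<0}a(n)q^n$ with $a(n)=0$ whenever $\chi_p(n)=-1$, satisfying $\sum_{n<0}s(n)a(n)b(-n)=0$ for all $g\in S_2^\delta(p,\chi_p)$, is realized by some $f\in A_0^+(p,\chi_p)$. I would invoke Serre duality on the modular curve: for $N$ large, the linear map that sends a weakly holomorphic form of pole order at most $N$ in the plus space to its principal part has cokernel canonically dual to $S_2^\delta(p,\chi_p)$, with the duality pairing being precisely $(P,g)\mapsto\sum_{n<0}s(n)a(n)b(-n)$; the necessity direction places the image inside the annihilator of $S_2^\delta$, and since that annihilator has the same codimension as the cokernel, the image is the whole annihilator. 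Alternatively, one can construct $f$ explicitly from weight-$0$ Maass--Poincar\'e series $F_m$ for $\rho$: the image of $F_m$ under the $\xi$-operator is a multiple of the $m$-th weight-$2$ Poincar\'e cusp form, so by the Bruinier--Funke pairing a linear combination $\sum_m a_mF_m$ has vanishing shadow---hence is genuinely weakly holomorphic---exactly when $\sum_m a_m\,(\text{$m$-th coefficient of }g)=0$ for every $g\in S_2$, which is the hypothesis; the desired $f$ is then that combination, corrected by a holomorphic form if needed to remove a spurious constant term.

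The main obstacle is combinatorial rather than analytic: one has to pin down the factor $s(n)$, which means carefully tracking the $\gamma\leftrightarrow-\gamma$ identification in the group algebra of the discriminant group under the plus-space isomorphism---or, in the scalar-only argument, the two-cusp residue computation on $X_0(p)$ with its unequal widths and the $\chi_p$-twist---and one must ensure that the Serre-duality or Poincar\'e-series argument is carried out within the $\rho$-isotypic (equivalently, plus) subspace and with the correct sign $\delta=\chi_p(-1)$, not in the full space of level-$p$ forms. Once the vector-valued/scalar dictionary is fixed, both implications are formal.
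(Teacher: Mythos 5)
The paper does not prove this statement; it is quoted verbatim from Bruinier--Bundschuh \cite[Theorem 6]{BB}, so there is no internal proof to compare against. Your sketch follows essentially the same route as the actual proof in \cite{BB} --- the plus-space/vector-valued dictionary for the Weil representation attached to a lattice of prime discriminant (which is exactly where the factor $s(n)$ arises, from the $\gamma\leftrightarrow-\gamma$ identification) combined with Borcherds' obstruction duality for prescribed principal parts --- and it is correct in outline.
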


In the rest of this section, we assume that $p \in \{ 5, 13, 17 \}$. Then we have $S_2^+(p,\chi_p)=0$. For a given positive integer $m$ with $\chi_p(m)\ne -1$, we let
$$f_m=\sum_{n\geq -m} a_m(n) q^n=s(m)^{-1} q^{-m}+\sum_{n=0}^\infty a_m(n) q^n,
$$
be the unique element of $A_0^+(p,\chi_p)$, whose principal part is $s(m)^{-1}q^{-m}$.

When $p=5$,
\begin{eqnarray*}
f_1 &=& q^{-1}+5+11q-54q^4+55q^5+44q^6-395q^9+340q^{10}+\cdots, \\
f_4 &=& q^{-4} +15-216q+4959q^4+22040q^5-90984q^6+\cdots,\\
f_9 &=& q^{-9}+35-3555q+922374q^4+7512885q^5-53113164q^6+\cdots.
\end{eqnarray*}
When $p=13$,
\begin{eqnarray*}
f_1 &=& q^{-1}+1+q+3q^3-2q^4-q^9-4q^{10}+10q^{12}+\cdots,  \\
f_4 &=& q^{-4} +3- 8q+16q^3+29q^4-70q^9-2q^{10}-32q^{11}+\cdots,\\
f_9 &=& q^{-9}+13- 9q+36q^3-198q^4+\cdots.
\end{eqnarray*}
When $p=17$,
\begin{eqnarray*}
f_1 &=& q^{-1}+\frac 12 -q+q^2+2q^4-q^8-2q^9+q^{13}-q^{15}+2q^{16}+\cdots, \\
f_4 &=& q^{-4} +\frac 72+8q-2q^2+11q^4-5q^8+16q^9-56q^{13}+\cdots,\\
f_9 &=& q^{-9}+ \frac 72-18q-27q^2+36q^4+243q^8+41q^9-279q^{13}+\cdots.
\end{eqnarray*}

If $p=5, 13$, we can prove that $f_1$ has integer coefficients. This follows from the fact that \[f_1(z)=\frac {E_2^{(p)}(z)}{H_2(z)},\] where $E_2^{(p)}$ is the normalized Eisenstein series of weight 2 for $\Gamma_0(p)$ with the trivial character and $H_2$ is the Eisenstein series with the character $\chi_p$ corresponding to the cusp 0 (there is a typo in \cite{M}, page 114):
\begin{eqnarray*} E_2^{(p)}(z) &=& 1 + \frac {24}{p-1}\sum_{n=1}^\infty (\sigma(n)-p\,  \sigma(n/p)) q^n, \\
H_2(z) &=& \sum_{n=1}^\infty (\sum_{d|n} d \chi_p(n/d)) q^n=q+O(q^2).
\end{eqnarray*}
Here we put the convention that if $p\nmid n$, $\sigma(n/p)=0$.
Note that if $p=5$, $H_2(z)=H^{(q)}(z)=\frac {\eta(5z)^5}{\eta(z)}$. When $p=13$, $H_2(z)$ may have a zero in the upper half plane, and yet it is remarkable that the quotient $\frac {E_2^{(p)}(z)}{H_2(z)}$ does not have a pole in the upper half plane.

In general, it is known that $a_m(n)$ are rational numbers and have a bounded denominator. See Proposition 8 in \cite{BB}.
But we can see that more is true in the case of $p=5, 13$.

\begin{Lem} \label{lem-513} Let $p=5, 13$. Then $s(n)a_m(n)$ are integers for all $n\geq 0$.
\end{Lem}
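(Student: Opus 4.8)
The plan is to use the explicit formula $f_1(z) = E_2^{(p)}(z)/H_2(z)$ stated just above, together with the multiplicative structure of the space $A_0^+(p,\chi_p)$ relative to $f_1$, to reduce the integrality of $s(n)a_m(n)$ for all $m$ to the single case $m=1$. First I would verify that $f_1$ itself has integer Fourier coefficients: both $E_2^{(p)}$ and $H_2$ lie in spaces with $\mathbb Z$-integral $q$-expansions (the Eisenstein series $E_2^{(p)}$ has coefficients $\frac{24}{p-1}(\sigma(n)-p\,\sigma(n/p))$, which are integral since $p-1 \mid 24$ for $p=5,13$; and $H_2 = q + O(q^2)$ has integer coefficients by its divisor-sum formula). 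Since $H_2$ is a normalized $q$-expansion starting with $q$, the formal power series $1/H_2$ has integer coefficients, hence so does $f_1 = E_2^{(p)} \cdot H_2^{-1}$, and thus $s(1)a_1(n) = a_1(n) \in \mathbb Z$ for all $n$ (note $s(1)=1$).

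Next, for general $m$, I would argue that $f_m$ can be written as a polynomial with integer coefficients in $f_1$ (and, if needed, in $j$ or an analogous Hauptmodul for $\Gamma_0(p)$), exploiting that $A_0^+(p,\chi_p)$ is a module over the ring of weakly holomorphic modular functions and that $f_1$ has a simple pole structure. Concretely, since $S_2^+(p,\chi_p)=0$ for $p=5,13$, Theorem~\cite[Theorem 6]{BB} guarantees $f_m$ exists and is unique with principal part $s(m)^{-1}q^{-m}$; I would then take a suitable integer-coefficient polynomial $P$ in $f_1$ whose principal part matches $s(m) f_m$ up to lower-order poles, subtract, and induct downward on the pole order, at each stage correcting by an integer multiple of $f_j$ with $j<m$ (using $s(j)a_j(n)\in\mathbb Z$ as the inductive hypothesis). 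The key point making the bookkeeping work is that multiplication by $f_1$ raises the pole order by exactly one with leading coefficient $1$, and that the "plus space" condition $a(n)=0$ when $\chi_p(n)=-1$ is preserved under the operations used; the factors of $s(n)=2$ for $p\mid n$ are exactly what is needed to clear the denominator $2$ that can appear in the constant or $p$-divisible coefficients (as visible in the $p=17$ examples, where such halves genuinely occur — which is precisely why the lemma is restricted to $p=5,13$, where one checks no such halving survives, or survives only at $p\mid n$).

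The main obstacle I anticipate is controlling the denominators uniformly: a priori Proposition~8 of \cite{BB} only gives a bounded denominator, and I must pin that bound down to divide $s(n)$. The cleanest route is probably to avoid the downward induction and instead express each $f_m$ directly via a Faber-polynomial / Hecke-operator type recursion in $f_1$ over $\mathbb Z$ — i.e. show the graded ring $\bigoplus A_0^+(p,\chi_p)$ (filtered by pole order) is generated over $\mathbb Z[1/1]$ by $f_1$ after rescaling by $s$ — and then the integrality is automatic from the $m=1$ case. I would finish by spot-checking the stated $q$-expansions of $f_1, f_4, f_9$ for $p=5$ and $p=13$ against this recursion to confirm the normalization constants $s(m)$ are correctly placed. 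The weight-$0$ setting and the vanishing of $S_2^+$ make the module structure rigid enough that no genuinely hard analytic input is required beyond what is already cited.
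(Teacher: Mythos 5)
Your overall strategy --- establish integrality for $f_1$ via $E_2^{(p)}/H_2$, then produce each $f_m$ by an integral recursion and a downward induction on pole order --- is the same as the paper's, and your treatment of the base case $m=1$ (including the observation that $p-1\mid 24$ for $p=5,13$ and that $H_2=q+O(q^2)$ makes $1/H_2$ integral) is correct. But there are two concrete gaps in the inductive step. First, you cannot iterate ``multiplication by $f_1$'': $f_1$ has nebentypus $\chi_p$, so $f_1^2$ has trivial character and no power $f_1^k$ with $k\ge 2$ lies in $A_0^+(p,\chi_p)$; moreover even a product of $f_1$ with a trivial-character function is not automatically in the plus space (the support condition $a(n)=0$ for $\chi_p(n)=-1$ is not preserved under multiplication). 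The paper's pole-raising operators are the trivial-character eta quotient $\tilde H(z)=\eta(z)^k/\eta(pz)^k$ with $k=24/\gcd(24,p-1)$ (a simple pole, integral coefficients) for $m<p$, and $j(pz)$ for the step $m\mapsto m+p$; and even then the assertion that the resulting corrections by lower $f_{m'}$ have \emph{integer} multipliers is not formal --- it is taken from Mayer's explicit computations.

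Second, and more seriously, your proposal has no mechanism for the case $p\mid m$, starting with $m=p$: the principal part of $f_p$ is $\tfrac12 q^{-p}$, and no $\mathbb Z$-linear combination of products of objects with integral principal parts can produce that half. Saying that ``$s(n)=2$ clears the denominator'' does not explain where the $\tfrac12$ enters or why the resulting half-integral coefficients occur only at $p\mid n$. The paper supplies this by writing $f_p$ as $\tfrac12 E_0$ minus integer multiples of the $f_m$ with $m<p$, where $E_0=E_2^+(z)\cdot\frac{E_4E_6}{\Delta}(pz)$, and then proving a parity statement: decomposing $E_2^+=1+2X+Y$ with $X$ supported on $p\nmid n$ (coefficients there are $\chi_p(d)+\chi_p(n/d)\in\{0,\pm2\}$, hence even) and $Y$ supported on $p\mid n$, one sees that the coefficient $b(n)$ of $E_0$ is even whenever $p\nmid n$, which is exactly the statement that $s(n)\cdot\tfrac12 b(n)\in\mathbb Z$. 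Without an analogue of this parity argument your induction cannot get past $m=p$, so the proof as proposed is incomplete.
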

\begin{proof} Let $\tilde H(z)=\frac {\eta(z)^k}{\eta(pz)^k}$, where $k=\frac {24}{gcd(24,p-1)}$. It belongs to $A_0(p,1)$.
In \cite{M}, page 112, we can see that $f_m(z)$, $m<p$, are generated by $\tilde H(z)$ and $f_1(z)$ over $\Bbb Z$. Hence they have integer coefficients. On the other hand, $f_p$ is obtained from $\frac 12 E_0$ by subtracting suitable integer multiples of $f_m$, $m<p$.
Hence it is enough to observe that if $E_0(z)=q^{-p}+\sum_{n=-p+1}^\infty b(n)q^n$, $b(n)$ is an even integer for $p\nmid n$. 
Recall that 
$$E_0(z)=E_2^+(z) \frac {E_4E_6}{\Delta}(pz), \quad E_2^+(z)=1+\frac 2{L(-1,\chi_p)} \sum_{n=1}^\infty \sum_{d|n} d(\chi_p(d)+\chi_p(n/d)) q^n .
$$
Here $L(-1,\chi_5)=-\frac 25$, and $L(-1,\chi_{13})=-2$, and if $p\nmid n$, then $p\nmid d$ for any $d|n$. So the possible values of $\chi_p(d)+\chi_p(n/d)$ are $0, \pm 2$. Therefore, we can write $E_2^+(z)=1+2X+Y$, with $X=\sum_{n=1,\  p \nmid n }^\infty A(n)q^n$ and $Y= \sum_{n=1}^\infty B(n) q^{pn}$. On the other hand, $\frac {E_4E_6}{\Delta}(pz)=q^{-p}(1+Z)$, with $Z=\sum_{n=1}^\infty C(n)q^{pn} $,  where $A(n), B(n), C(n)$ are integers. Hence our assertion is clear.

If $m>p$, $f_m(z)$ can be obtained from $j(pz) f_{m-p}(z)$ by subtracting suitable integer multiples of $f_{m'}$, $m'<m$. Hence by induction, we can see that for each $m$, $s(n)a_m(n)$ are integers.
\end{proof}

When $p=17$, it is likely that $s(n)a(n)$ are integers for $f_1=q^{-1} + \sum_{n\ge0} a(n) q^n$. But we were not able to verify it.
In Section \ref{H-correct}, we will assume that $s(n)a(n)$ are integers for all $n \ge 1$. 

\subsection{Borcherds lifts}
Let  $p \in \{ 5, 13, 17 \}$ and $F=\Bbb Q[\sqrt{p}]$. Denote the ring of integers of $F$ by $\mathcal{O}=\Bbb Z[\frac {1+\sqrt{p}}2]$ and the different of $F$ by $\frak{d}=(\sqrt{p})$. We keep the fundamental units $\varepsilon_0=\frac {1+\sqrt{5}}2$ for $p=5$,
$\varepsilon_0=\frac {3+\sqrt{13}}2$ for $p=13$,
$\varepsilon_0=4+\sqrt{17}$ for $p=17$ as in Section \ref{hyperbolic}.
Let $(z_1,z_2)$ be a standard variable on $\Bbb H^2$ and write $(y_1,y_2)$ for its imaginary part.
The Hilbert modular group $\Gamma_F=SL_2(\mathcal{O})$ acts on $\Bbb H^2$ in the usual way.
For a positive integer $m$ with $\chi_p(m)\ne -1$, let
$$S(m)=\bigcup_{\substack{\lambda\in\frak{d}^{-1} \\  N(\lambda)=-\frac mp }} \{ (z_1,z_2)\in \Bbb H^2:\, \lambda y_1+ \bar \lambda y_2=0\}.
$$

Let $f=\sum_{n\in\Bbb Z} a(n)q^n\in A_0^+(p,\chi_p)$ and assume that $s(n)a(n)\in\Bbb Z$ for all $n<0$.
Let $\mathcal W\subset \Bbb H^2$ be a Weyl chamber attached to $f$, i.e., a connected component of
$$
\Bbb H^2- \bigcup_{\substack{ n<0 \\  a(n)\ne 0 }}  S(-n).
$$
For $\lambda\in\frak{d}^{-1}$, we write $(\lambda, \mathcal W)>0$ if $\lambda \, y_1+\bar \lambda \, y_2>0$ for all $(z_1,z_2)\in \mathcal W$. Put $N=\min\{ n : a(n)\ne 0\}$. Then we have:
\begin{Thm} \cite{Bor-98, BB} \label{thm-lift} The Borcherds lift of $f$ is given by
$$
\Psi(z_1,z_2)=e(\rho_{\mathcal W} \, z_1+\overline{\rho_{\mathcal W}}\, z_2)\prod_{\substack{ \nu\in\frak{d}^{-1} \\  (\nu,\mathcal W)>0 } } (1-e(\nu z_1+\bar \nu z_2))^{s(p\nu\bar \nu) a(p\nu\bar \nu)}.
$$
Here $\Psi(z_1,z_2)$ is a Hilbert modular form of weight $a(0)$, and the product converges absolutely for all $(z_1,z_2)$ with $y_1y_2>\frac {|N|}p$ outside the set of poles. (See below for the definition of $\rho_{\mathcal W}$).
\end{Thm}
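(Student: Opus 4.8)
The plan is to obtain Theorem~\ref{thm-lift} by specializing Borcherds' singular theta lift (\cite[Theorem~13.3]{Bor-98}) to the even lattice $L=\Bbb Z\oplus\Bbb Z\oplus\frak d^{-1}$ of signature $(2,2)$ introduced in Section~\ref{O22}, and then transporting the resulting $O(2,2)$-modular form to $\Bbb H^2$ through the chain of identifications already set up there. So the proof is really the bookkeeping that turns the general product formula into the displayed one; none of the analytic substance lies outside \cite{Bor-98} and \cite{BB}.

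\emph{Step 1 (from scalar forms to vector-valued forms).} First I would pass from $f=\sum a(n)q^n\in A_0^+(p,\chi_p)$ to a vector-valued modular form $F$ of weight $0$ for the Weil representation attached to the discriminant form $L'/L$. Since $L$ has a hyperbolic plane as an orthogonal summand, the plus-space isomorphism of Bruinier--Bundschuh (\cite{BB}; compare the general scalar-valued results of Scheithauer) identifies $A_0^+(p,\chi_p)$ with the appropriate space of such $F$, the components of $F$ being recovered from the $a(n)$ with $n$ ranging over (multiples of) norms in $F$. The hypothesis $s(n)a(n)\in\Bbb Z$ for $n<0$ is exactly the condition that makes the principal part of $F$ integral, which is the input requirement of Borcherds' theorem, and the factor $s(n)$ records the multiplicity with which the relevant component of $L'/L$ is seen in the plus space (it is $2$ precisely in the ``boundary'' case $p\mid n$).

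\emph{Step 2 (apply Borcherds' theorem and transport to $\Bbb H^2$).} Feeding $F$ into \cite[Theorem~13.3]{Bor-98} produces a meromorphic modular form $\Psi_F$ on $\mathcal K^+$ for a finite-index subgroup of $O^+_L$: its divisor is a sum of rational quadratic divisors determined by the principal part of $F$; near the cusp picked out by the chosen isotropic splitting of $L$ it has a Fourier expansion whose leading term is $e((\rho,\cdot))$ for a Weyl vector $\rho$ and whose product part has exponents $c_\lambda(\lambda^2/2)$ over $\lambda$ in the positive cone of a Weyl chamber; and the product converges whenever the imaginary part is deep enough inside that chamber, quantitatively when $|Q(Y)|$ exceeds a fixed multiple of $\min\{n:a(n)\ne0\}$. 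Now I would apply the isometry $\tilde V\cong V$ of \eqref{eqn-metry}, the biholomorphism $z\mapsto[M(z)]$ of Section~\ref{subsec-H}, and the compatibility $\gamma M(z)=N(cz+d)M(\gamma z)$ together with $\Gamma_F=SL_2(\mathcal O)\subset O^+_L$. This turns $\Psi_F$ into a Hilbert modular form $\Psi(z_1,z_2)$ for $\Gamma_F$, of weight $a(0)$ (Borcherds' weight $c_0(0)/2$ becoming $a(0)$ under the correspondence of \cite{BB}). Under the identification a vector $(0,0,\nu)$ with $\nu\in\frak d^{-1}$ pairs with $M(z)$ to give $\nu z_1+\bar\nu z_2$, has norm $-pN(\nu)=-p\nu\bar\nu$, the positive-cone condition becomes $(\nu,\mathcal W)>0$, and the exponent $c_\lambda(\lambda^2/2)$ translates to $s(p\nu\bar\nu)a(p\nu\bar\nu)$; the walls $S(-n)$ are the images of the rational quadratic divisors in the divisor of $\Psi_F$; the convergence estimate $|Q(Y)|>|N|$ becomes $y_1y_2>|N|/p$ because $\tilde Q(M(z))$ is a constant multiple of $y_1y_2$; and $\rho_{\mathcal W}$ is the image of Borcherds' Weyl vector, whose closed form in terms of the $a(n)$ (and values of Hecke $L$-functions) is the one given below.

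\emph{Main obstacle.} The real work is the translation in Steps~1 and~2: keeping the plus-space doubling factors $s(n)$ straight, checking that every identification in Section~\ref{O22} (the isometry, the choice of cusp, the $SL_2(\mathcal O)$-action, the dual lattice $L'$) lines up so that the exponent index is exactly $p\nu\bar\nu$ and the weight is exactly $a(0)$, and extracting the explicit $\rho_{\mathcal W}$. The single most delicate point is verifying that the divisor of $\Psi_F$ is supported precisely on $\bigcup_{n<0,\,a(n)\ne0}S(-n)$ with the correct orders, since that is where one genuinely uses the structure of the plus space and not merely formal manipulation of the product.
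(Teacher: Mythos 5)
The paper states this theorem without proof, citing \cite{Bor-98, BB}, and your outline correctly reconstructs exactly the derivation used there: pass from the plus space $A_0^+(p,\chi_p)$ to a vector-valued form for the Weil representation of $L'/L$, apply Borcherds' singular theta lift, and transport the result to $\Bbb H^2$ via the identifications of Section~\ref{O22}, with the factor $s(n)$ accounting for the doubling in the plus-space isomorphism. This is the same approach as the source, so nothing further is needed.
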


The Weyl vector $\rho_{\mathcal W}$ and its conjugate $\overline{\rho_{\mathcal W}}$ are contained in $(\mathrm{tr}(\varepsilon_0))^{-1}\frak{d}^{-1}$, where $\varepsilon_0>0$ is the fundamental unit of $F$. More precisely, the vectors $\rho_{\mathcal W}$ and $\overline{\rho_{\mathcal W}}$ are given as follows:
For a negative integer $n$ with $a(n)\ne 0$, define
$$R(\mathcal W,n)=\{\lambda\in\frak{d}^{-1}: \, \lambda>0,\, N(\lambda)=\tfrac np,\, \lambda y_1+\bar\lambda y_2 < 0,\, \varepsilon_0^2\lambda y_1+{\bar\varepsilon_0}^2 \bar\lambda \, y_2>0,\, \text{for all $(z_1,z_2)\in \mathcal W$}\}.
$$
Then
\begin{equation} \label{eqn-rho} \rho_{\mathcal W} \, y_1+\overline{\rho_{\mathcal W}} \, y_2=\sum_{n<0} s(n)a(n) \frac 1{\mathrm{tr}(\varepsilon_0)} \sum_{\lambda\in R(\mathcal W,n)} (\varepsilon_0\lambda y_1+\bar\varepsilon_0 \bar\lambda \, y_2).
\end{equation}

Let $\Psi_m$ be the Borcherds lift of $f_m=s(m)^{-1}q^{-m}+\sum_{n=0}^\infty a(n)q^n$.  Write $m=q_1^{k_1}\cdots q_r^{k_r}$ into the prime factorization with distinct primes $q_i$. First, assume that $( \frac {q_i} p )=-1$ with an odd $k_i$ for some $i$. Then $m$ cannot be the norm of an ideal in $\mathcal{O}$,  and $S(m)$ is empty, and $\Bbb H^2$ is the only Weyl chamber for $f_m$. In this case, $\rho_{\mathcal W}=0$ and Theorem \ref{thm-lift} yields
\begin{equation} \label{eqn-Bor-1} \Psi_m(z_1,z_2)=\prod_{\substack{\nu\in\frak{d}^{-1} \\ \nu\gg 0}} (1-e(\nu z_1+\bar \nu z_2))^{s(p\nu\bar \nu)a(p\nu\bar \nu)}.
\end{equation}

Now we assume that  $( \frac {q_i} p )\neq 1$ for all $i$ and consider $\Psi_{m^2}$ for $m\in \mathbb N$. For example, if $p=17$, we consider $\Psi_1$ and $\Psi_9$; however we do not consider $\Psi_4$ or $\Psi_{16}$ since $(\frac 2 {17}) =1$.
Then $S(m^2)$ is not empty. More importantly, since no $q_i$ splits in $F$, the condition $N(\nu)=-m^2/p$ implies that $\nu=\pm \frac m {\sqrt p} \varepsilon_0^{2j}$ for some $j \in \mathbb Z$.
Let $\mathcal W$ be the Weyl chamber attached to $f_{m^2}$ that contains the point $(\sqrt{-1}, 2 \sqrt{-1})$.
Then $R(\mathcal W,-m^2)=\{\frac m{\sqrt{p}}\}$ and no other elements are included due to the condition on $m$. Hence, we obtain from \eqref{eqn-rho}
\[ \rho_{\mathcal W} = \frac {m \varepsilon_0 z_1}{\mathrm{tr}(\varepsilon_0)\sqrt{p}} \qquad \text{ for } m \in \mathbb N .\]

By \cite{M}, page 82, $(\nu,\mathcal W)>0$ is equivalent to $(\nu,\tau)>0$ for a point $\tau\in \mathcal W$. So in our case, it is equivalent to
$\nu + 2 \bar \nu >0$. If $\nu \gg 0$ (and $\nu + 2 \bar \nu >0$) for $\nu \in \frak d^{-1}$ then $N(\nu)>0$. If $\nu \not\gg 0$ and $\nu + 2 \bar \nu >0$, then $N(\nu)<0$ and $a(p\nu \bar\nu) \neq 0$ only for $\nu$ with $N(\nu)=-m^2/p$, in which case $s(p\nu \bar\nu)a(p\nu \bar\nu)=1$.
Therefore,
\begin{eqnarray}
\Psi_{m^2}(z_1,z_2) &=& e\left( \frac {m\varepsilon_0 z_1}{\mathrm{tr}(\varepsilon_0)\sqrt{p}}-\frac {m \bar \varepsilon_0 z_2}{\mathrm{tr}(\varepsilon_0)\sqrt{p}}\right) \prod_{\substack{\nu\in\frak{d}^{-1} \\ \nu+ 2\bar\nu>0} }
(1-e(\nu z_1+\bar \nu z_2))^{s(p\nu\bar \nu)a(p\nu\bar \nu)}  \nonumber \\
 &=& e\left( \frac {m\varepsilon_0 z_1}{\mathrm{tr}(\varepsilon_0)\sqrt{p}}-\frac {m \bar \varepsilon_0 z_2}{\mathrm{tr}(\varepsilon_0)\sqrt{p}}\right)\prod_{\substack{\nu\in\frak{d}^{-1} \\ \nu \gg 0} }
(1-e(\nu z_1+\bar \nu z_2))^{s(p\nu\bar \nu)a(p\nu\bar \nu)} \nonumber \\ & & \phantom{LLLLLLLLLLL} \times \prod_{\substack{\nu\in\frak{d}^{-1} ,\ \nu+ 2\bar\nu>0 \\ N(\nu)=-m^2/p} }
(1-e(\nu z_1+\bar \nu z_2)) . \label{eqn-Bor-2}
\end{eqnarray}

\vskip 1 cm

\section{Embedding of Hyperbolic Kac-Moody Algebras} \label{sec-embedding}

In this section we associate a family of $\mathcal H(a)$'s to each odd prime $p$ and prove that there exists a chain of embeddings among the algebras in each family. When $p=5, 13$ or $17$, we construct an automorphic correction of the first $\mathcal H(a)$ in each family, i.e. $\mathcal H(3)$, $\mathcal H(11)$, $\mathcal H(66)$ for $p=5, 3, 17$, respectively. The automorphic correction will be given by the Hilbert modular form $\Psi_1$ considered in the previous section. We also consider other $\Psi_m$ ($m \neq 1$) and see where the obstructions are for this to be an automorphic correction.

\subsection{Embedding of $\mathcal H (a)$}
We fix an odd prime $p$. We consider the Pell's equation \eqref{Pell} again. Recall that we fixed a fundamental unit $\varepsilon_0$ of $F$. We enumerate the solutions $(a_k, s_k)$ ($k=1, 2, \ldots$) of the equation so that if $p \equiv 1 \ (\mathrm{mod}\ 4)$,
\[ \eta_k = \frac {a_k + s_k \sqrt p} 2 = \varepsilon_0^{2k}, \qquad k=1, 2, \ldots,\] and
if $p \equiv 3 \ (\mathrm{mod}\ 4)$,
\[ \eta_k = \frac {a_k + s_k \sqrt p} 2 = \varepsilon_0^{k}, \qquad k=1, 2, \ldots.\]
Note that we have \[\eta_k^j = \eta_{kj} \quad \text{ for } k,j \in \mathbb N.\]

In Section \ref{hyperbolic}, we established an isometry of $\mathfrak h^*_{\mathbb Q}$ to $F$ for each $\mathcal H(a_k)$
  and obtained the set of positive real roots of $\mathcal H(a_k)$. Since the isometry depends on $k$, we denote it by $\psi_k$. Then we have
\begin{equation} \label{eqn-re} \psi_k(\Delta^+_{\mathrm{re}}) = \left \{  \frac 1 {\sqrt{p}} \, \eta_k^{j} \ (j > 0), \qquad  - \frac 1 {\sqrt{p}} \,  \bar \eta_k^{j}  \ (j \ge 0) \right \} . \end{equation}
We call an element of $\psi_k(\Delta^+_{\mathrm{re}})$ a positive real root by abusing the terminology. The set $\psi_k(\Delta^+_{\mathrm{im}})$ is given in \eqref{imroot}.
We will apply the following proposition to establish embeddings of $\mathcal H(a_k)$.

\begin{Prop}[\cite{FN}] \label{FN}
Let $\Delta$ be the set of roots of a Kac-Moody algebra $\mathfrak g$, with Cartan subalgebra $\mathfrak h$, and let $\Delta^+_{\mathrm{re}}$ be the set of positive real roots of $\mathfrak g$. Let $\beta_1, \cdots , \beta_n \in \Delta^+_{\mathrm{re}}$ be chosen such that for all $1 \le i\neq j \le n$, we have $\beta_i -\beta_j \notin \Delta$. For
$1 \le i \le n$, let $E_i$ and $F_i$ be nonzero root vectors in the root spaces corresponding to $\beta_i$ and $-\beta_i$, respectively, and let $H_i=[E_i, F_i] \in \mathfrak h$. Then the Lie subalgebra of $\mathfrak g$ generated by $\{E_i, F_i, H_i \,|\, 1 \le i \le n \}$ is a Kac-Moody algebra with Cartan matrix $\begin{pmatrix} \frac {2(\beta_i, \beta_j)}{(\beta_j, \beta_j)} \end{pmatrix}_{1\le i,j \le n}$.
\end{Prop}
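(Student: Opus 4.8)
The plan is to verify that the elements $E_i,F_i,H_i$ satisfy exactly the Chevalley--Serre relations defining the Kac--Moody algebra $\mathfrak g(A)$ with $A=\bigl(\tfrac{2(\beta_i,\beta_j)}{(\beta_j,\beta_j)}\bigr)_{1\le i,j\le n}$, to deduce a surjective homomorphism $\mathfrak g(A)\twoheadrightarrow\mathfrak g':=\langle E_i,F_i,H_i\mid 1\le i\le n\rangle$, and then to promote it to an isomorphism via the Gabber--Kac theorem. Before checking relations I would normalize: since each $\beta_i$ is a real root we have $\dim\mathfrak g_{\beta_i}=1$, $(\beta_i,\beta_i)>0$, and $[E_i,F_i]=(E_i,F_i)\,\nu^{-1}(\beta_i)$ for the standard invariant form on $\mathfrak g$ and $\nu\colon\mathfrak h\xrightarrow{\ \sim\ }\mathfrak h^*$; rescaling $E_i$ --- which does not alter the subalgebra generated, because every iterated bracket of the $E_i$'s and $F_i$'s lands in a fixed one-dimensional root space --- we may arrange $H_i=\beta_i^{\vee}$, so that $(E_i,F_i,H_i)$ is a standard $\mathfrak{sl}_2$-triple.

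The Chevalley relations are then almost automatic. One has $[H_i,H_j]=0$ because $\mathfrak h$ is abelian, and $[H_i,E_j]=\beta_j(H_i)\,E_j$, $[H_i,F_j]=-\beta_j(H_i)\,F_j$ by the root-space grading, where $\beta_j(H_i)=\tfrac{2(\beta_j,\beta_i)}{(\beta_i,\beta_i)}$ is the relevant entry of $A$ (in the standard Cartan-matrix convention). The one relation that genuinely uses the hypothesis is $[E_i,F_j]=\delta_{ij}H_i$: for $i=j$ it is the definition of $H_i$, while for $i\ne j$ we have $[E_i,F_j]\in\mathfrak g_{\beta_i-\beta_j}=\{0\}$ since $\beta_i-\beta_j\notin\Delta$.

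For the Serre relations, fix $i\ne j$ and let $\mathfrak s_i=\langle E_i,F_i,H_i\rangle\cong\mathfrak{sl}_2$ act on $\mathfrak g$ through $\operatorname{ad}$. The $\beta_i$-string through $\beta_j$ is finite because $\beta_i$ is real, so the $\mathfrak s_i$-submodule generated by $E_j$ is finite-dimensional; since $\beta_j-\beta_i\notin\Delta$ we have $[F_i,E_j]\in\mathfrak g_{\beta_j-\beta_i}=\{0\}$, so $E_j$ is a lowest-weight vector of weight $\beta_j(H_i)$ and generates the irreducible $\mathfrak{sl}_2$-module of that lowest weight. Consequently $(\operatorname{ad}E_i)^{1-\beta_j(H_i)}E_j=0$, which is exactly the $(+)$-Serre relation for $A$; the $(-)$-Serre relation follows symmetrically, again from $\beta_i-\beta_j\notin\Delta$.

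Having all the defining relations, we obtain the desired surjection $\phi\colon\mathfrak g(A)\to\mathfrak g'$ sending the standard generators to $E_i,F_i,H_i$. It restricts to an injection on the Cartan subalgebra $\mathfrak h(A)$ provided the $H_i=\beta_i^{\vee}$ are linearly independent in $\mathfrak h$ (automatic for two distinct positive real roots of a symmetric Kac--Moody algebra, which is the case used in this paper); hence $\ker\phi$ is an ideal of $\mathfrak g(A)$ meeting $\mathfrak h(A)$ trivially, and the Gabber--Kac theorem forces $\ker\phi=0$, so $\phi$ is an isomorphism. I expect the main obstacle to be the Serre-relation step, where one must invoke finiteness of real-root strings in a Kac--Moody root system and then extract the exact exponent $1-\beta_j(H_i)$ from $\mathfrak{sl}_2$-representation theory; the subsidiary point that the $H_i$ are linearly independent --- without which the Gabber--Kac step cannot be applied --- also requires a small argument.
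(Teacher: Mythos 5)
The paper offers no proof of this proposition---it is quoted from Feingold--Nicolai \cite{FN}---and your argument is correct and essentially reproduces the proof given there: verify the Chevalley relations (using $\beta_i-\beta_j\notin\Delta$ to kill $[E_i,F_j]$ for $i\neq j$), obtain the Serre relations from finiteness of real-root strings together with $\mathfrak{sl}_2$-representation theory, and conclude via Gabber--Kac. The only point worth adding is that your own $\mathfrak{sl}_2$ step already shows the off-diagonal entries $\beta_j(H_i)$ are non-positive integers, so that $A$ is genuinely a generalized Cartan matrix, and that $A$ is symmetrizable (since $a_{ij}(\beta_j,\beta_j)$ is symmetric in $i,j$), which is what licenses the appeal to Gabber--Kac.
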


We fix $k$  for the time being. Consider two positive real roots of $\mathcal H (a_k)$:
\begin{equation} \label{even} \beta_1 = \frac 1 {\sqrt p} \, \eta_k^{j} \quad \text{ and } \quad \beta_2 =  -\frac 1 {\sqrt p} \, \bar \eta_k^{j}  \quad  \text{ for } j>0 . \end{equation} Since $\beta_1 -\beta_2 = \frac 1 {\sqrt p} ( \eta_k^{j} + \bar \eta_k^{j} ) = \frac 1 {\sqrt p} (\eta_{kj}+ \bar \eta_{kj}  ) = \frac 1 {\sqrt p} a_{kj}$, it is clear that $\beta_1 -\beta_2$ is not a root. We also see that $\langle \beta_i, \beta_i \rangle =2$ $(i=1,2)$ and
\[ \langle \beta_1, \beta_2 \rangle = -p \left (   \tfrac 1 {\sqrt p} \,  \eta_k^{j}  \tfrac 1 {\sqrt p} \, \eta_k^{j} + \tfrac 1 {\sqrt p} \, \bar \eta_k^{j}  \tfrac 1 {\sqrt p} \, \bar \eta_k^{j} \right )= -(   \eta_k^{2j} +\bar \eta_k^{2j}) = - a_{2kj}  .\]
Similarly, if we take \begin{equation} \label{odd}  \beta_1 =  \frac 1 {\sqrt p} \, \eta_k^{j} \quad \text{ and } \quad \beta_2 =  -\frac 1 {\sqrt p} \, \bar \eta_k^{j-1}  \quad  \text{ for } j>0 ,\end{equation} then $\beta_1 -\beta_2 = \frac 1 {\sqrt p} ( \eta_k^{j}+\bar \eta_k^{j-1}) = \frac 1 {\sqrt p} \bar \eta_k^{j-1} (1 + \eta_{k}^{2j-1})$. Comparing it with elements in $\psi_k(\Delta^+_{\mathrm{re}})$ and $\psi_k(\Delta^+_{\mathrm{im}})$, we see that $\beta_1 -\beta_2$ is not a root. We also have that $\langle \beta_i, \beta_i \rangle =2$ $(i=1,2)$ and
\[ \langle \beta_1, \beta_2 \rangle = -p \left (  \tfrac 1 {\sqrt p} \,  \eta_k^{j-1}  \tfrac 1 {\sqrt p} \, \eta_k^{j} + \tfrac 1 {\sqrt p} \, \bar \eta_k^{j-1}  \tfrac 1 {\sqrt p} \, \bar \eta_k^{j} \right )= -(  \eta_k^{2j-1}+\bar \eta_k^{2j-1}) = - a_{k(2j-1)}  .\]

Hence we obtain the following theorem.

\begin{Thm} \label{thm-embedding}
Let $k$ and $l$ be positive integers, and assume that $k\,|\,l$.  Then there exists an embedding of $\mathcal H(a_l)$ into $\mathcal H(a_k)$ as a Lie subalgebra. Moreover, the root space of $\beta$ in $\mathcal H(a_l)$ is embedded into the root space of $\alpha$ in $\mathcal H(a_k)$ so that $\psi_l(\beta) =\eta_{kj}\psi_k(\alpha)$ if $l=2kj$ for some $j\in \mathbb N$ and $\psi_l(\beta) =\eta_{k(j-1)}\psi_k(\alpha)$ if $l=k(2j-1)$ for some $j \in \mathbb N$.
\end{Thm}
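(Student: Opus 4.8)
\emph{The plan} is to deduce the embedding from Proposition \ref{FN} applied with $n=2$. Since $k\mid l$, I would write $l=km$ with $m\in\mathbb N$ and distinguish the parity of $m$. If $m=2j$ is even, so $l=2kj$, I take the positive real roots $\beta_1=\tfrac1{\sqrt p}\eta_k^j$ and $\beta_2=-\tfrac1{\sqrt p}\bar\eta_k^j$ of $\mathcal H(a_k)$ as in \eqref{even}; if $m=2j-1$ is odd, so $l=k(2j-1)$, I take $\beta_1=\tfrac1{\sqrt p}\eta_k^j$ and $\beta_2=-\tfrac1{\sqrt p}\bar\eta_k^{j-1}$ as in \eqref{odd}. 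By \eqref{eqn-re} these are genuine positive real roots (the exponents lie in the allowed ranges because $m\geq1$).

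\emph{Verifying the hypotheses of Proposition \ref{FN}.} The bilinear-form computations made just before the statement give $\langle\beta_i,\beta_i\rangle=2$ for $i=1,2$ and $\langle\beta_1,\beta_2\rangle=-a_l$ in both cases, using $a_{2kj}=a_l$, resp.\ $a_{k(2j-1)}=a_l$, and $\eta_k^j=\eta_{kj}$. Hence $\langle\beta_1-\beta_2,\beta_1-\beta_2\rangle=\langle\beta_1,\beta_1\rangle-2\langle\beta_1,\beta_2\rangle+\langle\beta_2,\beta_2\rangle=4+2a_l$, which is positive and different from $2$ since $a_l\geq3$; because every real root has norm $2$ and every imaginary root has norm $\leq0$ (\cite{Kac}), it follows that $\beta_1-\beta_2\notin\Delta$ — and this is the only incidence condition to check when $n=2$. (Alternatively, one cites the direct non-rootness verification carried out before the statement.) Taking nonzero $E_i\in\mathcal H(a_k)_{\beta_i}$, $F_i\in\mathcal H(a_k)_{-\beta_i}$, $H_i=[E_i,F_i]$, Proposition \ref{FN} then yields a Kac-Moody subalgebra $\mathfrak s\subseteq\mathcal H(a_k)$ with Cartan matrix $\bigl(\tfrac{2\langle\beta_i,\beta_j\rangle}{\langle\beta_j,\beta_j\rangle}\bigr)=\begin{pmatrix}2&-a_l\\-a_l&2\end{pmatrix}$, which is precisely the matrix defining $\mathcal H(a_l)$; thus $\mathfrak s\cong\mathcal H(a_l)$, and $\mathfrak s\hookrightarrow\mathcal H(a_k)$ is the desired embedding.

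\emph{The root-space statement.} Under this embedding the root spaces $\mathcal H(a_l)_{\pm\alpha_i^{(l)}}$ are sent into $\mathcal H(a_k)_{\pm\beta_i}$ (this is implicit in Proposition \ref{FN}, the $E_i,F_i$ being the simple root vectors of $\mathfrak s$). Since every root space of $\mathcal H(a_l)$ is spanned by iterated brackets of the simple root vectors and the bracket adds weights, the root space of $\beta$ is carried into the root space of the root $\alpha$ of $\mathcal H(a_k)$ with $\psi_k(\alpha)=c_1\beta_1+c_2\beta_2$ when $\beta=c_1\alpha_1^{(l)}+c_2\alpha_2^{(l)}$. It then remains to compare $\psi_l(\beta)$ and $\psi_k(\alpha)$. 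Using linearity of $\psi_l,\psi_k$, the values $\psi_l(\alpha_1^{(l)})=\tfrac1{\sqrt p}\eta_l$ and $\psi_l(\alpha_2^{(l)})=-\tfrac1{\sqrt p}$, and the identities $\eta_k^j=\eta_{kj}$, $\eta_{2kj}=\eta_{kj}^2$, $N(\eta_{kj})=1$, one checks on the two simple roots that $\psi_l(\alpha_i^{(l)})=\eta_{kj}\,\psi_k(\beta_i)$ when $l=2kj$ and $\psi_l(\alpha_i^{(l)})=\eta_{k(j-1)}\,\psi_k(\beta_i)$ when $l=k(2j-1)$; by linearity these extend to $\psi_l(\beta)=\eta_{kj}\,\psi_k(\alpha)$, resp.\ $\psi_l(\beta)=\eta_{k(j-1)}\,\psi_k(\alpha)$, for every root $\beta$.

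\emph{Main obstacle.} I do not expect a genuine difficulty: the arithmetic of the bilinear form and the non-rootness of $\beta_1-\beta_2$ are already available, and Proposition \ref{FN} supplies the Lie-theoretic content. The only step needing care is the bookkeeping in the third paragraph — making the weight-additivity argument precise so that the target root $\alpha$ is correctly identified before the isometries are applied — after which the scaling identity is a short verification on the two simple roots.
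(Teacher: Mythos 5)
Your proposal is correct and follows essentially the same route as the paper: you apply Proposition \ref{FN} to the pairs of positive real roots in \eqref{even} and \eqref{odd}, use the bilinear-form computations to identify the Cartan matrix as that of $\mathcal H(a_l)$, and verify the scaling relation on the two simple roots. The only (harmless) variation is your uniform norm argument $\langle\beta_1-\beta_2,\beta_1-\beta_2\rangle=4+2a_l>2$ for the non-rootness of $\beta_1-\beta_2$, where the paper instead compares directly with the explicit lists of real and imaginary roots.
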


\begin{proof}
Applying Proposition \ref{FN} to the above computations, we obtain the first assertion. For the second assertion, we have only to investigate the simple roots. We notice that the simple roots of $\mathcal H(a_l)$ are $\frac 1 {\sqrt p} \eta_l$ and $-\frac 1 {\sqrt p}$. Assume that $l=2kj$. If we multiply the simple roots of $\mathcal H(a_l)$ by $\bar \eta_{kj}$, we obtain $\frac 1 {\sqrt p} \eta_{kj}$ and $-\frac 1 {\sqrt p} \bar \eta_{kj}$, which are the roots in \eqref{even} and generate a copy of $\mathcal H(a_l)$ inside $\mathcal H(a_k)$. Now assume that $l=k(2j-1)$. Multiplying the simple roots of $\mathcal H(a_l)$ by $\bar \eta_{k(j-1)}$, we get $\frac 1 {\sqrt p} \eta_{kj}$ and $-\frac 1 {\sqrt p} \bar \eta_{k(j-1)}$, which are the roots in \eqref{odd}. This proves the theorem.
\end{proof}

We write $\mathrm{mult}(a_k, \alpha)$ for the multiplicity of $\alpha$ in $\mathcal H(a_k)$ for $k \in \mathbb N$.
\begin{Cor} \label{lower}
Assume that  we have either $\psi_l(\beta) =\eta_{kj}\psi_k(\alpha)$ and $l=2kj$ for some $j\in \mathbb N$, or $\psi_l(\beta) =\eta_{k(j-1)}\psi_k(\alpha)$ and $l=k(2j-1)$ for some $j \in \mathbb N$. Then we have the inequalities:
\[ \mathrm{mult}(a_l, \beta) \le \mathrm{mult}(a_k, \alpha).\]
\end{Cor}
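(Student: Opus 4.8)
The plan is to read off the inequality directly from the embedding constructed in Theorem \ref{thm-embedding}, so that the corollary becomes essentially a restatement of the second assertion there. Write $\iota \colon \mathcal H(a_l) \hookrightarrow \mathcal H(a_k)$ for that embedding, and let $\mathcal H(a_l)_\beta$ and $\mathcal H(a_k)_\alpha$ denote the root spaces of $\beta$ and of $\alpha$ in the respective algebras. The content of the second part of Theorem \ref{thm-embedding} is precisely that, under either of the two hypotheses on $(\beta,\alpha,l)$ listed in the corollary (namely $\psi_l(\beta)=\eta_{kj}\psi_k(\alpha)$ with $l=2kj$, or $\psi_l(\beta)=\eta_{k(j-1)}\psi_k(\alpha)$ with $l=k(2j-1)$), the map $\iota$ carries $\mathcal H(a_l)_\beta$ into $\mathcal H(a_k)_\alpha$.

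First I would observe that $\iota$, being an embedding of Lie algebras, is in particular an injective $\mathbb C$-linear map, so its restriction $\iota|_{\mathcal H(a_l)_\beta}\colon \mathcal H(a_l)_\beta \to \mathcal H(a_k)_\alpha$ is again injective. Comparing dimensions then yields $\dim \mathcal H(a_l)_\beta \le \dim \mathcal H(a_k)_\alpha$, which is exactly the asserted inequality $\mathrm{mult}(a_l,\beta) \le \mathrm{mult}(a_k,\alpha)$ by the definition of root multiplicity as the dimension of the corresponding root space.

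The only point needing any care — and it has already been handled in the proof of Theorem \ref{thm-embedding} — is the verification that the image of the $\beta$-root space lies inside a \emph{single} root space of $\mathcal H(a_k)$, namely the one indexed by $\alpha$. This is where the explicit bookkeeping with the powers $\eta_{kj}$ and the split into the cases $l=2kj$ and $l=k(2j-1)$ enters; it was carried out by tracking how the simple root vectors of $\mathcal H(a_l)$ are realized as root vectors of $\mathcal H(a_k)$ via Proposition \ref{FN}. Consequently there is no substantive obstacle to overcome in the corollary beyond citing the theorem, and the proof reduces to the two short steps above.
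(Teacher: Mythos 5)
Your proposal is correct and is exactly the argument the paper intends: the corollary is stated without proof as an immediate consequence of the second assertion of Theorem \ref{thm-embedding}, which already asserts that the root space of $\beta$ embeds into the root space of $\alpha$, so injectivity gives the dimension inequality. No further comment is needed.
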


\begin{Rmk}
An upper bound for $\mathrm{mult}(a_k, \alpha)$ is given by the homogeneous dimension of the corresponding free Lie algebra (see \cite{KaMe}). Since the depth (or height) of $\beta$ is much smaller than that of $\alpha$, the number $\mathrm{mult}(a_l, \beta)$ can be considered as a lower bound for $\mathrm{mult}(a_k, \alpha)$.
\end{Rmk}

\subsection{Automorphic correction of $\mathcal H(a)$} \label{H-correct}

In the rest of this section, we will construct automorphic correction of  $\mathcal H(a)$ for $a=3, 11, 66$. Hence, $a=a_1$ for each prime $p \in \{ 5, 13, 17 \}$ and we will write $\psi=\psi_1$ for convenience. Recall that we need to establish data (1)-(4) (Section \ref{correction}). We already have data (1)-(3). More precisely, we put \[ M=\psi^{-1}(\frak d^{-1}) \subset \frak h^*_{\mathbb Q} \quad \text{ for each }\Bbb Q(\sqrt p), \ p=5, 13, 17 ,\] and use the same bilinear form on $\frak h^*_{\Bbb Q}$. Then $M$ is of signature $(1,1)$. We take the same Weyl group $W$ for the reflection group of $M$, and choose the cone
\begin{equation} \label{eqn-V} V^+(M) = \{ x \gamma^+ + y \gamma^- \in \mathfrak h^*_{\mathbb R} \, |\,  x >0 , \ y>0 \}. \end{equation} We set $\Pi = \{ \alpha_1, \alpha_2 \}$ and obtain the Weyl chamber \[ \mathcal M =\{ \beta \in V^+(M) \, | \, (\beta, \alpha_i) \le 0 , \ i=1,2 \} = \mathbb R_{\le 0}\, \omega_1+ \mathbb R_{\le 0}\, \omega_2 . \] The Weyl vector is given by  $\rho = -(\omega_1 +\omega_2)$. The Cartan matrix is the same $A$ for $\mathcal H(a)$.

Now we consider the data (4). We have the complexified cone \[ \Omega(V^+(M))= M \otimes \mathbb R + i V^+(M) = \left \{ \binom {z_1} {z_2} : Im(z_1)>0, \ Im(z_2)>0 \right \} \subset \mathfrak h^*\] with respect to the basis $\{ \gamma^+ , \gamma^- \}$ and from our choice of $V^+(M)$ in \eqref{eqn-V}. Then $\Omega(V^+(M))$  is naturally identified with $\mathbb H^2$. We choose the extended lattice $L= \begin{pmatrix} 0 & -p \\ -p & 0 \end{pmatrix} \oplus M$, which is essentially identical to $L$ in \eqref{eqn-latt}. Then the space $\mathcal K^+$ is given by
\begin{eqnarray*} \mathcal K^+ &=& \left \{ [ \tfrac {(z,z)}{2p} e_1 + e_2 + z ] \in P(L(\mathbb C)) : z= \tbinom {z_1}{z_2} \in \Omega(V^+(M)) \right \} \\ & =& \left \{ [ -z_1z_2 e_1 + e_2 + \tbinom {z_1}{z_2} ] \in P(L(\mathbb C)): (z_1, z_2) \in \mathbb H^2 \right  \} \\ & \cong &  \left \{ \left  [ \begin{pmatrix} z_1z_2 & z_1 \\ z_2 & 1 \end{pmatrix} \right  ]\in P(M_2(\mathbb C)) : (z_1, z_2) \in \mathbb H^2 \right  \}. \end{eqnarray*} The last identification follows from \eqref{eqn-metry}. The action of $SL_2(\mathcal O)$ on $\mathbb H^2 \cong \Omega(V^+(M))$ is compatible with its action on $M_2(\mathbb C)$, and we have $SL_2(\mathcal O)= \Gamma_F \subset O^+_L$. As we observed in Section \ref{subsec-H}, an automorphic form on $\Omega(V^+(M))$ is a Hilbert modular form. Hence an automorphic correction of $\mathcal H (a)$ is a Hilbert modular form which can be written as a product. We obtain natural examples from Section \ref{Hilbert} where we considered the works of Bruinier and others on Hilbert modular forms as Borcherds products (\cite{Bor-98, BB}).

Actually, our automorphic correction will be a Hilbert modular form with respect to the congruence subgroup $\Gamma_0(p)$ defined by
\[ \Gamma_0(p) = \left \{ \begin{pmatrix} a & b \\ c & d \end{pmatrix} \in SL_2(\mathcal O) : a, b, d \in \mathcal O, \ c \in (p) \right \} \subset O^+_L ,\] where $(p) \subset \mathcal O$ is the principal ideal generated by $p$. The notation $\Gamma_0(p)$ is the same as the congruence subgroup $\Gamma_0(p)$ of $SL_2(\mathbb Z)$. However, it will be clear from the context which group we mean.

\medskip

We fix $p \in \{ 5, 13, 17 \}$ and consider $\mathcal H(a_1)$. Recall that $a_1=3$ for $p=5$, $a_1=11$ for $p=13$ and $a_1=66$ for $p=17$.
We identify $\mathbb H^2$ with $\Omega (V^+(M)) \subset \mathfrak h^*$ as above. Then the Weyl group $W$ acts on $\mathbb H^2$; in particular, we have \[ r_1(z_1, z_2) = (\eta_1^2 z_2, \bar \eta_1^2 z_1) \quad \text{ and } \quad r_2(z_1, z_2) =(z_2, z_1) . \] Recall that the Weyl group $W$ also acts on $F$ by \[ r_1 \nu= \eta_1^2 \bar \nu \quad \text{ and } \quad r_2 \nu = \bar \nu \qquad \text{ for } \nu \in F.\]
For $m \in \mathbb N$, we define a map $\psi^{(m)}: \mathfrak h^*_{\mathbb Q} \rightarrow F$ by $\binom \nu {\bar \nu} \mapsto m \nu$, i.e. $\psi^{(m)} =m\psi$. Then we obtain
\begin{equation} \label{eqn-rw}   \psi^{(m)} (\rho)= \frac m {sp} (1 +\eta_1) = \frac {m \varepsilon_0}{\mathrm{tr}(\varepsilon_0) \sqrt p} .\end{equation}

\begin{Lem} \label{lem-pos}
Assume that $m=q_1^{k_1}\cdots q_r^{k_r}$ is the prime factorization of $m$ with distinct primes $q_i$ and suppose that
$( \frac {q_i} p )\neq 1$ for all $i$. Then we have $\nu \in \psi^{(m)}(\Delta^+_{\mathrm{re}})$ if and only if $\nu \in \mathfrak d^{-1}$, $\nu+2 \bar \nu >0$ and $N(\nu)=-m^2/p$.
\end{Lem}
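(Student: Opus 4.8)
The plan is to unwind both sides of the equivalence against the explicit description \eqref{eqn-re} of $\psi^{(m)}(\Delta^+_{\mathrm{re}})$, which reads $\psi^{(m)}(\Delta^+_{\mathrm{re}})=\{\,\tfrac{m}{\sqrt p}\eta_1^{j}\ (j>0)\,\}\cup\{\,-\tfrac{m}{\sqrt p}\bar\eta_1^{\,j}\ (j\ge 0)\,\}$, keeping in mind the elementary facts $\mathfrak d^{-1}=\tfrac1{\sqrt p}\mathcal O$, $N(\eta_1)=1$, $\eta_1\in\mathcal O^\times$, and $\eta_1=a_1-\eta_1^{-1}>a_1-1\ge 2$ (from $1+\eta_1^2=a_1\eta_1$ and $\eta_1>1$). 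For the forward implication one simply substitutes: if $\nu=\tfrac{m}{\sqrt p}\eta_1^{j}$ with $j\ge 1$, then $\nu\in\mathfrak d^{-1}$ since $\eta_1^{j}\in\mathcal O^\times$, $N(\nu)=m^2\,N(1/\sqrt p)\,N(\eta_1)^{j}=-m^2/p$, and $\nu+2\bar\nu=\tfrac{m}{\sqrt p}(\eta_1^{j}-2\eta_1^{-j})>0$ because $\eta_1^{-j}(\eta_1^{2j}-2)>0$; the case $\nu=-\tfrac{m}{\sqrt p}\bar\eta_1^{\,j}$ with $j\ge 0$ is the same computation, using $\bar\nu=\tfrac{m}{\sqrt p}\eta_1^{\,j}$ and $\nu+2\bar\nu=\tfrac{m}{\sqrt p}(2\eta_1^{\,j}-\eta_1^{-j})>0$. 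This direction is purely routine.

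For the converse, suppose $\nu\in\mathfrak d^{-1}$, $\nu+2\bar\nu>0$ and $N(\nu)=-m^2/p$. Write $\nu=x/\sqrt p$ with $x\in\mathcal O$; then $N(x)=-p\,N(\nu)=m^2$. The key step---and the only place where the hypothesis $(\tfrac{q_i}{p})\ne 1$ is used---is to show that $(x)=(m)$ as ideals of $\mathcal O$. The point is that no $q_i$ splits in $F$: when $(\tfrac{q_i}{p})=-1$ the prime $q_i$ is inert, and the only other possibility allowed by the hypothesis is $q_i=p$, which is ramified with $(p)=(\sqrt p)^2$. Every prime ideal dividing $(x)$ lies above a rational prime dividing $N(x)=m^2$, hence above one of the $q_i$; comparing $q_i$-adic valuations on norms pins the exponents down---for inert $q_i$ the unique prime $(q_i)$ of norm $q_i^2$ occurs with exponent $\mathrm{ord}_{q_i}(m)$, and for $q_i=p$ the unique prime $(\sqrt p)$ of norm $p$ occurs with exponent $2\,\mathrm{ord}_p(m)$, contributing $(\sqrt p)^{2\,\mathrm{ord}_p(m)}=(p)^{\mathrm{ord}_p(m)}$---and multiplying these local contributions gives $(x)=(m)$. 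Consequently $x=um$ with $u\in\mathcal O^\times$; writing $u=\pm\varepsilon_0^{n}$ and using $m^2=N(x)=N(\varepsilon_0)^{n}m^2=(-1)^{n}m^2$ forces $n=2j$ to be even, so $x=\pm\eta_1^{\,j}m$ and hence $\nu=\pm\tfrac{m}{\sqrt p}\eta_1^{\,j}$ for some $j\in\mathbb Z$ (interpreting $\eta_1^{\,j}=\bar\eta_1^{\,-j}$ when $j<0$).

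Finally I would use $\nu+2\bar\nu>0$ to discard the two wrong sign combinations among these four: if $\nu=\tfrac{m}{\sqrt p}\eta_1^{\,j}$ with $j\ge 1$, or $\nu=-\tfrac{m}{\sqrt p}\bar\eta_1^{\,l}$ with $l=-j\ge 0$, then $\nu$ already lies in $\psi^{(m)}(\Delta^+_{\mathrm{re}})$ and we are done; in the two remaining cases a one-line estimate gives $\nu+2\bar\nu=\tfrac{m}{\sqrt p}(\eta_1^{-l}-2\eta_1^{\,l})<0$ (for $\nu=\tfrac{m}{\sqrt p}\bar\eta_1^{\,l}$, $l\ge 0$) and $\nu+2\bar\nu=\tfrac{m}{\sqrt p}(2\eta_1^{-j}-\eta_1^{\,j})<0$ (for $\nu=-\tfrac{m}{\sqrt p}\eta_1^{\,j}$, $j\ge 1$, using $\eta_1>2$), contradicting the hypothesis. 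I expect the ideal identity $(x)=(m)$---that is, converting the norm equation $N(x)=m^2$ together with the non-splitting of the $q_i$ into an equality of ideals, so that $x$ is forced to be a unit times $m$---to be the main obstacle; the rest is bookkeeping with the unit group and the explicit shape of $\Delta^+_{\mathrm{re}}$.
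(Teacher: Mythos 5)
Your proof is correct and follows the same route as the paper: the paper's own argument is a two-line sketch that deduces $\nu=\pm\frac{m}{\sqrt p}\varepsilon_0^{2j}$ from the non-splitting of the $q_i$ and then invokes the condition $\nu+2\bar\nu>0$ together with \eqref{eqn-re}. You have simply filled in the details it leaves implicit — the ideal identity $(x)=(m)$ via valuations at the inert and ramified primes, the parity argument on the unit exponent using $N(\varepsilon_0)=-1$, and the explicit sign check with $\eta_1>2$ — all of which are accurate.
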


\begin{proof}
The ``only if" part can be verified through straightforward computations. Consider the ``if" part. Since no $q_i$ splits in $F$, we obtain $\nu=\pm \frac m {\sqrt p} \varepsilon_0^{2j}$ for some $j \in \mathbb Z$ from the conditions $N(\nu)=-m^2/p$ and $\nu \in \mathfrak d^{-1}$. Recall that $\varepsilon_0^2=\eta_1$. We obtain from the description of the positive real roots \eqref{eqn-re} that
the additional condition $\nu + 2 \bar \nu >0$ makes $\nu \in \psi^{(m)}(\Delta^+_{\mathrm{re}})$.
\end{proof}

\begin{Rmk} \label{rmk-re}
It is important to notice that the same conditions as in the above lemma appear  in the Borcherds lift $\Psi_{m^2}$ in \eqref{eqn-Bor-2}. In particular, we can write for such an $m$
\[ \prod_{\substack{\nu\in\frak{d}^{-1} ,\ \nu+ 2\bar\nu>0 \\ N(\nu)=-m^2/p} }
(1-e(\nu z_1+\bar \nu z_2)) =\prod_{\nu\in \psi^{(m)}(\Delta^+_{\mathrm{re}}) }
(1-e(\nu z_1+\bar \nu z_2)) .\]
\end{Rmk}

\begin{Prop} \label{prop-det}
Let $\Psi_m$ be the Borcherds lifts for $m \in \mathbb N$. Define $\overline{\Psi}_m(z_1, z_2) = \Psi_m(z_2, z_1)$ and write $m=q_1^{k_1}\cdots q_r^{k_r}$ into the prime factorization with distinct primes $q_i$.
   \begin{enumerate}

  \item  Assume that $( \frac {q_i} p )=-1$ with an odd $k_i$ for some $i$. Then we have
  \[ \overline \Psi_m(wz)= \overline \Psi_m(z) \qquad \text{ for } w \in W.\]

   \item Assume that
$( \frac {q_i} p )\neq 1$ for all $i$. Then we have  \[ \overline \Psi_{m^2}(wz)= \det(w)\overline \Psi_{m^2}(z) \qquad \text{ for } w \in W.\]

\end{enumerate}
\end{Prop}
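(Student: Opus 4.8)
The plan is to verify the transformation law by tracking the effect of the generators $r_1, r_2$ of $W$ on the Borcherds product expressions \eqref{eqn-Bor-1} and \eqref{eqn-Bor-2}, using the identification $\mathbb H^2 \cong \Omega(V^+(M))$ and the explicit action $r_1(z_1,z_2)=(\eta_1^2 z_2, \bar\eta_1^2 z_1)$, $r_2(z_1,z_2)=(z_2,z_1)$. Since $W$ is generated by $r_1$ and $r_2$ and $\det(r_1)=\det(r_2)=-1$, it suffices to check the two cases for these two generators; the general statement then follows because both sides of the claimed identity are multiplicative characters of $W$ composed with $\Psi$ (more precisely, $w\mapsto \overline\Psi(wz)/\overline\Psi(z)$ is a homomorphism $W\to\{\pm 1\}$ once we know it is $\pm 1$ on generators, by associativity of the $W$-action).

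For part (1), first note $\overline\Psi_m(z_1,z_2)=\Psi_m(z_2,z_1)$ and that $\Psi_m$ is the product \eqref{eqn-Bor-1}, indexed by $\nu\in\mathfrak d^{-1}$ with $\nu\gg 0$. The key point is that the exponent $s(p\nu\bar\nu)a_m(p\nu\bar\nu)$ depends only on the product $\nu\bar\nu=N(\nu)$, which is invariant under $\nu\mapsto\bar\nu$ and under $\nu\mapsto\eta_1^2\nu$ (since $N(\eta_1^2)=1$). So I would show that the substitution $z\mapsto r_i z$ in the product merely permutes the index set $\{\nu\in\mathfrak d^{-1}:\nu\gg 0\}$: for $r_2$ one sends $\nu\mapsto\bar\nu$ and swaps $z_1\leftrightarrow z_2$, so $\nu z_2 + \bar\nu z_1 \mapsto \bar\nu z_1 + \nu z_2$, i.e. the set of linear forms $\{\nu z_1+\bar\nu z_2\}$ is preserved; for $r_1$ one sends $\nu\mapsto\bar\eta_1^2\nu$ (totally positive is preserved since $\eta_1$ is a totally positive unit) and uses $(\bar\eta_1^2\nu)z_1 + (\eta_1^2\bar\nu)z_2$ matched against $\nu\mapsto\eta_1^2\bar\nu$ in the $r_1 z$ evaluation. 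Since in case (1) there is no Weyl-vector prefactor ($\rho_{\mathcal W}=0$), reindexing gives $\overline\Psi_m(r_i z)=\overline\Psi_m(z)$ outright.

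For part (2), the product is \eqref{eqn-Bor-2}, which splits into the totally-positive part $\prod_{\nu\gg 0}(1-e(\nu z_1+\bar\nu z_2))^{s(p\nu\bar\nu)a(p\nu\bar\nu)}$, the finite "real-root" factor $\prod_{\nu\in\psi^{(m)}(\Delta^+_{\mathrm{re}})}(1-e(\nu z_1+\bar\nu z_2))$ (using Remark \ref{rmk-re}), and the exponential prefactor $e\!\left(\tfrac{m\varepsilon_0 z_1}{\mathrm{tr}(\varepsilon_0)\sqrt p}-\tfrac{m\bar\varepsilon_0 z_2}{\mathrm{tr}(\varepsilon_0)\sqrt p}\right)=e(\psi^{(m)}(\rho) z_1 - \overline{\psi^{(m)}(\rho)} z_2)$ by \eqref{eqn-rw}. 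The totally-positive part is $W$-invariant by the same reindexing argument as in part (1). The real-root factor times the prefactor is precisely (a rescaling of) the Kac–Moody Weyl denominator $e(-(\rho,z))\prod_{\alpha\in\Delta^+_{\mathrm{re}}}(1-e(-(\alpha,z)))$ for $\mathcal H(a_1)$ — here one uses that $\psi^{(m)}(\Delta^+_{\mathrm{re}})$ is a $W$-stable set and that $\rho$ is the Weyl vector with $(\rho,\alpha_i)=-(\alpha_i,\alpha_i)/2=-1$; the classical fact that the Weyl denominator is $W$-anti-invariant (i.e. $w$ acts by $\det(w)$) then gives the $\det(w)$ factor. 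Concretely, for $r_i$ one checks $\{\alpha\in\psi^{(m)}(\Delta^+_{\mathrm{re}})\}\setminus\{$the one root sent negative$\}$ is permuted, the single flipped root $\beta$ contributes $(1-e(\beta z))\mapsto -e(\beta z)(1-e(-\beta z))$ giving a factor $-e(\beta z)$, and the shift $\rho\mapsto r_i\rho=\rho-\beta^\vee$-type correction in the prefactor exactly cancels the $e(\beta z)$, leaving the sign $-1=\det(r_i)$.

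The main obstacle will be bookkeeping in part (2): correctly matching the exponential prefactor's transformation under $r_1$ (where both the coefficient $\psi^{(m)}(\rho)$ and the variables get twisted by $\eta_1^2$) against the single real root that crosses the wall, so that the $e(\cdots)$ terms cancel and only the sign survives. This amounts to re-deriving the $W$-anti-invariance of the rank-two Weyl denominator in the $\gamma^\pm$-coordinates; I would handle it by writing $(\rho,z)$ and $(\alpha,z)$ explicitly via the bilinear form $(\gamma^+,\gamma^-)=-p$ and checking the $r_2$ case (a clean swap) first, then conjugating by the relation $r_1=(\text{the eigenvector twist})\circ r_2$, or equivalently verifying $r_1$ directly from $r_1(z_1,z_2)=(\eta_1^2 z_2,\bar\eta_1^2 z_1)$ and $\psi(r_1\nu)=\eta_1^2\overline{\psi(\nu)}$.
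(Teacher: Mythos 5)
Your proposal is correct and follows essentially the same route as the paper: reduce to the generators $r_1,r_2$, show the totally-positive product is invariant by reindexing $\nu\mapsto\bar\nu$ and $\nu\mapsto\eta_1^2\bar\nu$ (using that the exponent depends only on $N(\nu)$), and in part (2) track the single simple real root sent negative together with the Weyl-vector shift $r_i\rho=\rho+\alpha_i$ so that the exponential factors cancel and only $\det(r_i)=-1$ survives. The paper's proof is exactly this generator-by-generator computation, so no substantive difference to report.
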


\begin{proof}
We have only to consider the simple reflections $r_1$ and $r_2$. First, consider the part (1). In this case, the Borcherds product $\Psi_m$ is of the form \eqref{eqn-Bor-1}. It is easy to see that \[ \nu \gg 0 \quad \Longleftrightarrow \quad \bar \nu \gg 0 \quad \Longleftrightarrow \quad \bar \nu \eta_1^2 \gg 0 .\] Then we obtain $\overline \Psi_m(r_1z) = \overline \Psi_m(r_2z) = \overline \Psi_m(z)$.

Now we consider the part (2). From \eqref{eqn-Bor-2} and Remark \ref{rmk-re}, we have
\begin{eqnarray*}
&{}& \overline \Psi_{m^2}(z_1,z_2) = \Psi_{m^2}(z_2, z_1) \\
&=& e\left( \frac {m\varepsilon_0 z_2}{\mathrm{tr}(\varepsilon_0)\sqrt{p}}-\frac {m \bar \varepsilon_0 z_1}{\mathrm{tr}(\varepsilon_0)\sqrt{p}}\right)\prod_{\substack{\nu\in\frak{d}^{-1} \\ \nu \gg 0} }
(1-e(\nu z_2+\bar \nu z_1))^{s(p\nu\bar \nu)a(p\nu\bar \nu)} \prod_{\nu\in \psi^{(m)}(\Delta^+_{\mathrm{re}})  }
(1-e(\nu z_2+\bar \nu z_1)). 
\end{eqnarray*}
The product over $\nu \gg 0$ is invariant under $r_1$ and $r_2$ as in the part (1). Write $\nu_i= \psi^{(m)}(\alpha_i)$, $i=1,2$. Each $r_i$ sends $\nu_i$ to $-\nu_i$ and keeps the set $\psi^{(m)}(\Delta^+_{\mathrm{re}} \setminus \{ \alpha_i \} )$ invariant. For $w=r_1$, we have
\begin{eqnarray*}
& & \overline \Psi_{m^2}(r_1(z_1,z_2)) = \overline \Psi_{m^2}(\eta_1^2 z_2, \bar \eta_1^2 z_1) =  \Psi_{m^2}(\bar \eta_1^2 z_1, \eta_1^2 z_2) \\
&=& A_1 \prod_{\substack{\nu\in\frak{d}^{-1} \\ \nu \gg 0} }(1-e(\nu z_1+\bar \nu z_2))^{s(p\nu\bar \nu)a(p\nu\bar \nu)} \prod_{\nu\in \psi^{(m)}(\Delta^+_{\mathrm{re}})  }
(1-e(\nu \bar \eta_1^2 z_1+\bar \nu \eta_1^2 z_2)) \\ & =& A_1 \prod_{\substack{\nu\in\frak{d}^{-1} \\ \nu \gg 0} }(1-e(\nu z_2+\bar \nu z_1))^{s(p\nu\bar \nu)a(p\nu\bar \nu)} \prod_{\nu\in \psi^{(m)}(\Delta^+_{\mathrm{re}})  }
(1-e(r_1 \nu z_2+\overline {r_1\nu} z_1)) \\
&=& A_1  \  \frac { 1-e(-\nu_1 z_2 - \bar \nu_1 z_1)}{1-e(\nu_1 z_2+\bar \nu_1 z_1)} \prod_{\substack{\nu\in\frak{d}^{-1} \\ \nu \gg 0} }(1-e(\nu z_2+\bar \nu z_1))^{s(p\nu\bar \nu)a(p\nu\bar \nu)} \prod_{\nu\in \psi^{(m)}(\Delta^+_{\mathrm{re}})  }
(1-e(\nu z_2+\bar \nu z_1))\\ &=& - A_1 \ e( -\nu_1 z_2 - \bar \nu_1 z_1)  \prod_{\substack{\nu\in\frak{d}^{-1} \\ \nu \gg 0} }
(1-e(\nu z_2+\bar \nu z_1))^{s(p\nu\bar \nu)a(p\nu\bar \nu)} \prod_{\nu\in \psi^{(m)}(\Delta^+_{\mathrm{re}})  }
(1-e(\nu z_2+\bar \nu z_1)),
\end{eqnarray*}
where we put \[ A_1 = e\left( \frac {m\varepsilon_0 \bar \eta_1^2 z_1}{\mathrm{tr}(\varepsilon_0)\sqrt{p}}-\frac {m \bar \varepsilon_0 \eta^2 z_2}{\mathrm{tr}(\varepsilon_0)\sqrt{p}}\right) .\] We obtain from \eqref{eqn-rw} that
\[ -\frac {m \bar \varepsilon_0 \eta^2}{\mathrm{tr}(\varepsilon_0)\sqrt{p}} - \nu_1 = \psi^{(m)}(r_1 \rho) - \psi^{(m)}(\alpha_1) = \psi^{(m)}(\rho) =  \frac {m\varepsilon_0 }{\mathrm{tr}(\varepsilon_0)\sqrt{p}} .\]
Combining these computations, we see that $\overline \Psi_{m^2}(r_1(z_1,z_2)) = -\overline \Psi_{m^2}(z_1, z_2)$. Similarly, we can show that $\overline \Psi_{m^2}(r_2(z_1,z_2)) = -\overline \Psi_{m^2}(z_1, z_2)$.
\end{proof}

\medskip

We define $\Phi_m(z)=\overline \Psi_m(pz)$. Then the function $\Phi_m(z)$ is a Hilbert modular form with respect to $\Gamma_0(p)$ thanks to the following lemma.
\begin{Lem} \label{lem-pz}
Let $g(z)$ be a Hilbert modular form for $\mathbb Q(\sqrt p)$ with respect to $SL_2(\mathcal O)$. Define $f(z)=\overline g(pz)$, where $\overline g(z_1, z_2) = g(z_2, z_1)$. Then the function $f(z)$ is a Hilbert modular form with respect to the congruence subgroup $\Gamma_0(p)$.
\end{Lem}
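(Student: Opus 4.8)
The plan is to reduce the statement to two elementary observations: first, that swapping the two variables of a Hilbert modular form produces again a Hilbert modular form for $SL_2(\mathcal O)$ (of the same weight, with Galois-twisted multiplier), and second, that pulling back along the scaling $z\mapsto pz$ conjugates $\Gamma_0(p)$ into $SL_2(\mathcal O)$, the factor $p$ cancelling out of the automorphy factor. Combining the two gives the transformation law for $f$ under $\Gamma_0(p)$, and meromorphy is inherited from $g$, so the definition of Hilbert modular form in Section \ref{subsec-H} is met.

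First I would verify that $\overline g(z_1,z_2)=g(z_2,z_1)$ is a Hilbert modular form for $\Gamma_F=SL_2(\mathcal O)$. The point is that $\mathcal O$ is stable under $x\mapsto\bar x$, so the map $\gamma=\begin{pmatrix} a&b\\ c&d\end{pmatrix}\mapsto\gamma^\sigma=\begin{pmatrix}\bar a&\bar b\\ \bar c&\bar d\end{pmatrix}$ preserves $SL_2(\mathcal O)$, and the actions of $\gamma$ and of $\gamma^\sigma$ on $\mathbb H^2$ are intertwined by the coordinate swap: $\overline g(\gamma(z_1,z_2))=g\bigl(\gamma^\sigma(z_2,z_1)\bigr)$. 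Applying the transformation law for $g$ and using that $N(cz+d)=(cz_1+d)(\bar cz_2+\bar d)$ is symmetric under $(z_1,z_2)\mapsto(z_2,z_1)$, one gets $\overline g(\gamma z)=\chi(\gamma^\sigma)\,N(cz+d)^k\,\overline g(z)$. Hence $\overline g$ is a Hilbert modular form for $SL_2(\mathcal O)$ of weight $k$, with multiplier system $\gamma\mapsto\chi(\gamma^\sigma)$.

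Next, write $f(z)=\overline g(\alpha z)$, where $\alpha=\begin{pmatrix} p&0\\ 0&1\end{pmatrix}$ acts on $\mathbb H^2$ by $z=(z_1,z_2)\mapsto(pz_1,pz_2)=pz$ (the matrix is rational, so it scales both factors by $p$). For $\gamma=\begin{pmatrix} a&b\\ c&d\end{pmatrix}\in\Gamma_0(p)$ one computes $\gamma':=\alpha\gamma\alpha^{-1}=\begin{pmatrix} a&pb\\ c/p&d\end{pmatrix}$; since $a,b,c,d\in\mathcal O$ and $c\in(p)$, all entries of $\gamma'$ lie in $\mathcal O$, and $\det\gamma'=ad-bc=1$, so $\gamma'\in SL_2(\mathcal O)$. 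Then $f(\gamma z)=\overline g(\alpha\gamma z)=\overline g(\gamma'\alpha z)$, and the step above yields $f(\gamma z)=\chi((\gamma')^\sigma)\,N(c'\cdot\alpha z+d')^k\,f(z)$ with $c'=c/p$, $d'=d$. The final bookkeeping is the cancellation $c'(pz_1)+d'=cz_1+d$ (and likewise in the conjugate factor), so $N(c'\cdot\alpha z+d')=(cz_1+d)(\bar cz_2+\bar d)=N(cz+d)$. Setting $\chi_f(\gamma):=\chi\bigl((\alpha\gamma\alpha^{-1})^\sigma\bigr)=\chi(\alpha\gamma^\sigma\alpha^{-1})$, which is a multiplier system of $\Gamma_0(p)$, we obtain $f(\gamma z)=\chi_f(\gamma)\,N(cz+d)^k\,f(z)$, as desired.

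The only step that requires a moment's care — and it is precisely what forces the level-$p$ group rather than all of $SL_2(\mathcal O)$ — is checking that $\alpha\gamma\alpha^{-1}$ has $\mathcal O$-integral entries, which needs exactly $c\in(p)$. Everything else is routine manipulation of the automorphy factor, and I do not foresee any genuine obstacle; one only needs to be mindful of the Galois twist picked up by the multiplier system in passing from $g$ to $\overline g$, and of the fact that $\begin{pmatrix} p&0\\0&1\end{pmatrix}$, being rational, scales both coordinates of $\mathbb H^2$.
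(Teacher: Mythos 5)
Your proof is correct and follows essentially the same route as the paper: conjugating $\Gamma_0(p)$ by $\operatorname{diag}(p,1)$ into $SL_2(\mathcal O)$ and observing that the factor $p$ cancels in $N(cz+d)$, with the multiplier system transported accordingly. The only difference is that you prove directly (via the Galois twist $\gamma\mapsto\gamma^\sigma$) that the coordinate swap $\overline g$ is again a Hilbert modular form for $SL_2(\mathcal O)$, whereas the paper cites Mayer's thesis (Theorem 4.4.4 in \cite{M}) for that step; your argument for it is valid and makes the proof self-contained.
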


\begin{proof}
By Theorem 4.4.4 in \cite{M}, the function $\overline g(z)$ is a Hilbert modular form with respect to $SL_2(\mathcal O)$. Assume that $\mu$ is the multiplier system for $\overline g$. Then we define $\tilde \mu$ on $\Gamma_0(p)$ by
\[ \tilde \mu  \begin{pmatrix} a & b \\ pc & d \end{pmatrix}  =  \mu \begin{pmatrix} a & bp \\ c & d \end{pmatrix} , \qquad  \begin{pmatrix} a & b \\ pc & d \end{pmatrix} \in \Gamma_0(p) .\]
For $\gamma \in \begin{pmatrix} a & b \\ pc & d \end{pmatrix} \in \Gamma_0(p)$, we have
\begin{eqnarray*}
f(\gamma z) &=& \overline g(p\, \gamma z) = \overline g \left ( \frac {a pz+bp}{cpz+d} \right ) \\ &=& \mu \begin{pmatrix} a & bp \\ c & d \end{pmatrix}  N(cp z+d)^k \overline g(pz) = \tilde \mu \begin{pmatrix} a & b \\ pc & d \end{pmatrix}  N(pc z+d)^k f(z).
\end{eqnarray*}
Thus $f(z)$ is a Hilbert modular for with respect to $\Gamma_0(p)$ with the multiplier system $\tilde \mu$.
\end{proof}

Since  $F \cong \mathfrak h^*_{\mathbb Q} \subset \mathfrak h^*$, $\nu \mapsto \binom {\nu}{\bar \nu}$, and $\mathbb H^2 \cong \Omega(V^+(M)) \subset \mathfrak h^*$, $(z_1,z_2) \mapsto \binom {z_1}{z_2}$, the symmetric bilinear form on $\mathfrak h^*$ induces a paring on $F \times \mathbb H^2$ given by
\[ (\nu, z)= -p\,(\nu z_2+ \bar \nu z_1) \qquad \text{for } \nu \in F \text{ and } z=(z_1, z_2) \in \mathbb H^2. \]

Write $m=q_1^{k_1}\cdots q_r^{k_r}$ as before. If $( \frac {q_i} p )=-1$ with an odd $k_i$ for some $i$,then we can rewrite \eqref{eqn-Bor-1} and obtain
  \begin{equation} \label{eqn-bb} \Phi_m(z) = \prod_{\substack{\nu\in\frak{d}^{-1} \\ \nu\gg 0}} (1-e(p\,(\nu z_2+\bar \nu z_1)))^{s(p\nu\bar \nu)a(p\nu\bar \nu)} = \prod_{\substack{\nu\in\frak{d}^{-1} \\ \nu\gg 0}} (1-e(-(\nu,z))^{s(p\nu\bar \nu)a(p\nu\bar \nu)}.\end{equation}
We write $\rho_m=\psi^{(m)}(\rho)$. If $( \frac {q_i} p )\neq 1$ for all $i$, then we obtain from \eqref{eqn-Bor-2}, \eqref{eqn-rw} and Remark \ref{rmk-re},

\begin{eqnarray}
\Phi_{m^2}(z) &=&  e\left( p\, (\rho_m z_2 + \bar \rho_m z_1)\right)\prod_{\substack{\nu\in\frak{d}^{-1} \\ \nu \gg 0} }
(1-e(p\, (\nu z_2+\bar \nu z_1))^{s(p\nu\bar \nu)a(p\nu\bar \nu)} \nonumber \\ & & \phantom{LLLLLLLLLLL} \times \prod_{\nu\in \psi^{(m)}(\Delta^+_{\mathrm{re}}) }
(1-e(p\, (\nu z_2+\bar \nu z_1)) \nonumber \\ &=&  e\left( -(\rho_m,z) \right)\prod_{\substack{\nu\in\frak{d}^{-1} \\ \nu \gg 0} }
(1-e(-(\nu,z))^{s(p\nu\bar \nu)a(p\nu\bar \nu)} \prod_{\nu\in \psi^{(m)}(\Delta^+_{\mathrm{re}})}
(1-e(-(\nu,z)) .  \label{eqn-bq}
\end{eqnarray}

We write $f_1=q^{-1} + \sum_{n\ge0} a(n) q^n$. When $p=17$, we assume that $s(n)a(n)$ are integers for all $n \ge 1$. This is necessary since $s(n)a(n)$ will be considered as root multiplicities in what follows. See the remark at the end of Section \ref{weakly}.  Now we state the main theorem of this paper.

\begin{Thm} \label{main}
Let $p \in \{5, 13,17 \}$.  Then the Hilbert modular form $\Phi_1$
provides an automorphic correction for the hyperbolic Kac-Moody algebra $\mathcal H(a_1)$, where $a_1=3$ for $p=5$, $a_1=11$ for $p=13$ and $a_1=66$ for $p=17$. In particular, there exists a generalized Kac-Moody superalgebra $\widetilde{\mathcal H}$ whose denominator function is the Hilbert modular form $\Phi_1$.
\end{Thm}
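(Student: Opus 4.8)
The plan is to verify that the data (1)--(4) of Section~\ref{correction} are in place for $\mathfrak g=\mathcal H(a_1)$, with $M=\psi^{-1}(\mathfrak d^{-1})$, $\Pi=\{\alpha_1,\alpha_2\}$, Weyl vector $\rho=-(\omega_1+\omega_2)$, extended lattice $L=\begin{pmatrix}0&-p\\-p&0\end{pmatrix}\oplus M$, and $\Phi=\Phi_1$, and then to invoke the Gritsenko--Nikulin construction \cite{GN-02}. Data (1)--(3) were already arranged above in Section~\ref{H-correct}: $M$ has signature $(1,1)$, the simple reflections $r_1,r_2$ are reflections in the roots $\alpha_1,\alpha_2$ of $M$, the set $\mathcal M=\mathbb R_{\le 0}\omega_1+\mathbb R_{\le 0}\omega_2$ is a fundamental chamber of $W$ with orthogonal root set $\Pi$, and $\rho$ satisfies $(\rho,\alpha_i)=-(\alpha_i,\alpha_i)/2=-1$. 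So the remaining task is data (4): one must show that $\Phi_1$ is a holomorphic automorphic form on $\Omega(V^+(M))\cong\mathbb H^2$ with respect to a finite index subgroup of $O^+_L$, and that it has a Fourier expansion of the shape displayed at the end of Section~\ref{correction} with all $m(a)$ in $\mathbb Z$.

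First I would settle the analytic properties. Since the principal part of $f_1$ is $q^{-1}$ and $s(1)=1$, the only $\nu\in\mathfrak d^{-1}$ with $N(\nu)<0$ and nonvanishing coefficient $a(p\nu\bar\nu)$ are those with $N(\nu)=-1/p$, and these enter the Borcherds product \eqref{eqn-Bor-2} with exponent $s(-1)a(-1)=1>0$; hence $\Psi_1$, and therefore $\Phi_1$, has no poles. By Theorem~\ref{thm-lift}, $\Psi_1$ is a holomorphic Hilbert modular form for $SL_2(\mathcal O)$ of weight $a(0)\ge 0$, so Lemma~\ref{lem-pz} gives that $\Phi_1=\overline\Psi_1(pz)$ is a holomorphic Hilbert modular form for $\Gamma_0(p)$; via the identification $\mathbb H^2\cong\Omega(V^+(M))$ and $\Gamma_0(p)\subset O^+_L$ from Section~\ref{H-correct}, this says $\Phi_1$ is a holomorphic automorphic form on $\Omega(V^+(M))$ with respect to a finite index subgroup of $O^+_L$. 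Moreover, since $1$ has an empty prime factorization, Proposition~\ref{prop-det}(2) applies with $m=1$ and gives $\overline\Psi_1(wz)=\det(w)\overline\Psi_1(z)$ for $w\in W$; as $W$ acts linearly on $\mathfrak h^*$ and commutes with the dilation $z\mapsto pz$, it follows that $\Phi_1(wz)=\det(w)\Phi_1(z)$, so $\Phi_1$ is $W$-anti-invariant.

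Next I would read off the Fourier expansion from the Borcherds product \eqref{eqn-bq} with $m=1$,
\[ \Phi_1(z)=e(-(\rho,z))\prod_{\nu\in\psi(\Delta^+_{\mathrm{re}})}\bigl(1-e(-(\nu,z))\bigr)\ \prod_{\substack{\nu\in\mathfrak d^{-1}\\ \nu\gg 0}}\bigl(1-e(-(\nu,z))\bigr)^{s(p\nu\bar\nu)a(p\nu\bar\nu)}. \]
By Lemma~\ref{lem-pos} and Remark~\ref{rmk-re}, the first product is precisely the product over the positive real roots of $\mathcal H(a_1)$, each with multiplicity $1$; consequently every exponent occurring in $\Phi_1$ lies in $\rho+\bigl(\mathbb Z_{\ge 0}\alpha_1+\mathbb Z_{\ge 0}\alpha_2\bigr)$, so the Fourier support of $\Phi_1$ lies in the forward cone $\rho+\overline{V^+(M)}$, with $\rho$ the unique element of lowest height in its $W$-orbit inside that support. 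A holomorphic $W$-anti-invariant automorphic form with this support property automatically rearranges into the form of data (4) (cf.\ \cite{GN-96,GN-02,Bor-98}), the integers $m(a)$ being determined by the totally positive part of the product; these $m(a)$ are integers because the product exponents $s(p\nu\bar\nu)a(p\nu\bar\nu)$ are integers --- by Lemma~\ref{lem-513} when $p=5,13$, and by our standing assumption (end of Section~\ref{weakly}) when $p=17$. With data (1)--(4) established, the Gritsenko--Nikulin construction produces a generalized Kac--Moody superalgebra $\widetilde{\mathcal H}=\mathcal G$ whose denominator identity is the displayed Borcherds product, so whose denominator function is $\Phi_1$; since the Cartan matrix of the real simple roots $\alpha_1,\alpha_2$ is $\bigl(\tfrac{2(\alpha_i,\alpha_j)}{(\alpha_j,\alpha_j)}\bigr)=\begin{pmatrix}2&-a_1\\-a_1&2\end{pmatrix}$, the Kac--Moody subalgebra of $\widetilde{\mathcal H}$ generated by the real simple root vectors is $\mathcal H(a_1)$, and hence $\Phi_1$ is an automorphic correction of $\mathcal H(a_1)$.

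The step I expect to be the main obstacle is the passage from the infinite product \eqref{eqn-bq} to the data-(4) Fourier expansion with \emph{integral} coefficients $m(a)$. Two things are essential there. First, one must know that the product exponents $s(p\nu\bar\nu)a(p\nu\bar\nu)$ are integers --- this is exactly where Lemma~\ref{lem-513} (and, for $p=17$, the integrality assumption) is needed, since otherwise the $m(a)$ would not be genuine root multiplicities and no honest generalized Kac--Moody superalgebra would result. Second, one must be certain that the positive real roots of $\mathcal H(a_1)$ are \emph{exactly} the divisors entering the product with exponent $+1$ coming from the $q^{-1}$ term of $f_1$, so that they get absorbed into the Weyl group sum while the remaining (totally positive) exponents record the imaginary simple root data; this identification is supplied by Lemma~\ref{lem-pos}. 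A subsidiary technical point is the convergence of the rearranged series, which follows from the convergence range $y_1y_2>1/p$ in Theorem~\ref{thm-lift} together with the $\Gamma_0(p)$-modularity of $\Phi_1$.
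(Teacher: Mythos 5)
Your proposal is correct and follows essentially the same route as the paper: both verify data (1)--(4) with the product expansion \eqref{eqn-bq} for $m=1$, use Proposition \ref{prop-det} to get $W$-anti-invariance, identify the real-root factor via Lemma \ref{lem-pos}/Remark \ref{rmk-re}, and rearrange the Fourier expansion over $W$-orbits into the data-(4) shape before invoking the Gritsenko--Nikulin construction. The only difference is that where you assert the rearrangement is ``automatic,'' the paper carries it out explicitly --- in particular checking that coefficients supported on the chamber walls vanish by anti-invariance and that $m(0)=-1$ --- but this is a matter of detail, not of method.
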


\begin{proof}
From \eqref{eqn-bq}, we have \[
\Phi_{1}(z) =  e\left(-(\rho,z) \right)\prod_{\substack{\nu\in\frak{d}^{-1} \\ \nu \gg 0} }
(1-e(-(\nu,z))^{s(p\nu\bar \nu)a(p\nu\bar \nu)} \prod_{\nu\in \psi(\Delta^+_{\mathrm{re}})}
(1-e(-(\nu,z)) .
\]
We will drop $\psi$ from the notation. We write $\Phi_1(z) = \sum_{\mu} b_\mu e(-(\mu, z))$. By Proposition \ref{prop-det}, we obtain $\Phi_1(wz) = \det (w) \Phi_1(z)$ for $w \in W$. Since $\Phi_1(wz) = \sum_{\mu} b_{w\mu}e(-(\mu,z))$, we get \begin{equation} \label{anti} b_{w\mu}= \det (w) b_{\mu}.\end{equation} One can easily see that $\rho+\nu \in V^+(M)$ for $\nu \in \frak d^{-1}$, $\nu \gg 0 $ and for $\nu \in \Delta^+_{\mathrm{re}}$. Hence the sum is over $\mu \in V^+(M)$ such that $\mu -\rho \in M$. Then we can write, using the fundamental chamber $\mathcal M \subset V^+(M)$,
\[ \Phi_1(z) = \sum_{w \in W} \det(w) \left ( - \sum_{\substack{\rho+ \nu \in \mathcal M \\ \nu \in M} } m(\nu) e(-(w(\rho+\nu), z)) \right ).\]

Assume that $\rho+\nu \in \mathcal M$. If $(\rho+\nu, \alpha_i)=0$ for $i=1, 2$, then $\rho+\nu$ is invariant under $r_i$, and $m(\nu)=0$ from \eqref{anti}. Thus we may assume $(\rho+\nu, \alpha_i) <0$. Then we have $(\nu, \alpha_i) \le 0$ for $i=1, 2$ and $\nu \in \mathcal M$ if $\nu \neq 0$. Since $m(0)=-1$, we have
\[ \Phi_1(z) = \sum_{w \in W} \det(w) \left ( e(-(w(\rho), z)) - \sum_{\nu \in M \cap \mathcal M } m(\nu) e(-(w(\rho+\nu), z)) \right ).\]
This is exactly of the form required by the item (4) for an automorphic correction in Section \ref{correction}. The data (1)-(3) have already been established in the beginning of Section \ref{H-correct}.
The existence of the corresponding generalized Kac-Moody superalgebra $\widetilde{\mathcal H}$ is a consequence of the theory of an automorphic correction as explained in Section \ref{correction}.
\end{proof}

\begin{Rmk}
There are some obstructions when we try to interpret $\Phi_m$ $(m \neq 1)$ as an automorphic correction. When we have $\Phi_m$ of the form \eqref{eqn-bb}, we do not have the product corresponding to the real roots. When the function $\Phi_{m^2}$ is of the form \eqref{eqn-bq}, we have both parts corresponding to the real roots and to the imaginary roots. However, the map $\psi^{(m)}$ is not an isometry. If we make $\psi^{(m)}$ an isometry by adjusting the bilinear form on $F$, the natural lattice would be $m \frak d^{-1}$. Then $\Phi_{m^2}$ is not an automorphic correction of $\mathcal H (a_1)$.
\end{Rmk}

\vskip 1 cm

\section{Asymptotics for Root Multiplicities}

In this section, we obtain asymptotics of Fourier coefficients of the modular forms $f_m$ defined in Section \ref{weakly}. Note that the Fourier coefficients of $f_1$ are root multiplicities of the generalized Kac-Moody superalgebra $\widetilde{\mathcal H}$ with some modification.

We recall the result of J. Lehner \cite{L1} on Fourier coefficients of modular forms using the method of Hardy-Ramanujan-Rademacher.
We refer to \cite{L1} for unexplained notations:
Let $f(z)$ be a weakly homomorphic modular form of weight $0$ with respect to $\Gamma$. Let $p_0=\infty, p_1,...,p_{s-1}$ be the cusps of $\Gamma$,
and
$$A_0=\begin{pmatrix} 1&0\\0&1\end{pmatrix},\quad A_j=\begin{pmatrix} 0&-1\\1&-p_j\end{pmatrix},\, j>0.
$$
Let $M^*=A_j M=\begin{pmatrix} a&b\\c&d\end{pmatrix}$ for $M\in \Gamma$, and let
\begin{eqnarray*} C_{j0} &=& \{ c\, |\, \begin{pmatrix} \cdot &\cdot\\ c&\cdot\end{pmatrix}\in A_j\Gamma \}, \\
                  D_c    &=& \{ d\, |\, \begin{pmatrix} \cdot &\cdot\\ c&d\end{pmatrix}\in A_j\Gamma,\, 0<d\leq c \}.
\end{eqnarray*}
It can be shown (\cite{L1}, page 313) that given such $c,d$, there is a unique $a$ such that $-c\lambda_j\leq a<0$. For $k=1,...,s-1$, let
$$e \left ( - \tfrac {\kappa_k} {\lambda_k} {A_k z} \right ) f(z)=\sum_{n=-\mu_k}^\infty a(n)^{(k)} q_k^n,\quad q_k=e \left ( \tfrac {A_k z}{\lambda_k} \right ),
$$
where $\kappa_k, \lambda_k$ are defined as in \cite{L2}, page 398.
By replacing $A_k z$ by $z$, this can be written as
$$f(p_k-\tfrac 1z)= q^{\frac {\kappa_k}{\lambda_k}} \sum_{n=-\mu_k}^\infty a(n)^{(k)} q^{\frac n{\lambda_k}}.
$$
For $k=0$, we have the usual Fourier expansion: (We assume that $\lambda_0=1, \kappa_0=0$ for $\Gamma$.)
$$f(z)=\sum_{n=-\mu_0}^\infty a(n) q^n.
$$

\begin{Thm} \cite{L1} For $n>0$,
\begin{equation} \label{fourier} a(n)=2\pi  \sum_{j=0}^{s-1} \sum_{\nu=1}^{\mu_j} a(-\nu)^{(j)} \sum_{c\in C_{j0}} c^{-1}A(c,n,\nu_j)M(c,n,\nu_j,0),
\end{equation}
where $\nu_j=\frac {\nu-\kappa_j}{\lambda_j}$, and
\begin{eqnarray*}
A(c,n,\nu_j) &=& \sum_{d\in D_c} v^{-1}(M) e \left ( \frac {nd-\nu_ja}c \right ), \quad M=A_j^{-1} M^*\\
M(c,n,\nu_j,0) &=& \left (\frac {\nu_j}n \right )^{\frac 12} I_1 \left (\frac {4\pi\sqrt{n\nu_j}}c \right ).
\end{eqnarray*}
\end{Thm}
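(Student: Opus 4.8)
The plan is to prove \eqref{fourier} by the Hardy--Ramanujan--Rademacher circle method, in the form worked out by Lehner \cite{L1} for weakly holomorphic forms of weight $0$; in the paper itself one simply cites \cite{L1} after checking that $f=f_m$ (a weight-$0$ weakly holomorphic form on $\Gamma_0(p)$ with the finite-order character $\chi_p$, hence with a multiplier system $v$ of finite order) meets the hypotheses, but the underlying argument runs as follows. First I would start from Cauchy's formula, writing
\[ a(n) = \int_{\mathcal C} f(z)\, e(-nz)\, dz , \]
where $\mathcal C$ is the horizontal segment from $iY$ to $1+iY$ for $Y$ large enough that $\mathcal C$ lies above every pole of $f$ in a fundamental domain (recall $f$ has period $1$). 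One then pushes $\mathcal C$ toward the real axis along a Farey dissection of order $N$: the segment is replaced by a chain of arcs of Ford circles $C_{d/c}$ tangent to $\mathbb R$ at the Farey fractions $d/c$ with $0<d\le c\le N$, and on the arc near $d/c$ the substitution $z=d/c+i/(cw)$ carries the arc to a path near $i\infty$ in the $w$-plane.

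Next comes the cusp bookkeeping. Each Farey fraction $d/c$ is $\Gamma$-equivalent to a unique cusp $p_j$, witnessed by an element $M^*=A_jM\in A_j\Gamma$ with lower row $(c,d)$; this is precisely what the sets $C_{j0}$ and $D_c$ record, and the auxiliary integer $a$ with $-c\lambda_j\le a<0$ fixes $M=A_j^{-1}M^*$. Applying the weight-$0$ transformation law of $f$ with multiplier $v$, near the arc at $d/c$ one replaces $f(z)$ by $v^{-1}(M)\, e\!\left(\tfrac{\kappa_j}{\lambda_j}A_jz\right)\sum_{n\ge -\mu_j}a(n)^{(j)}q_j^{\,n}$. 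Only the finitely many principal-part terms $n=-\nu$ with $1\le\nu\le\mu_j$ produce nonzero main terms, the exponent of $q$ being $\nu_j=(\nu-\kappa_j)/\lambda_j$; the non-negative terms contribute integrals that tend to $0$ as $N\to\infty$. Collecting, for fixed $c$ and $j$, the sum over the admissible $d\in D_c$ produces the twisted exponential sum $A(c,n,\nu_j)=\sum_{d\in D_c}v^{-1}(M)\,e\!\left(\tfrac{nd-\nu_ja}{c}\right)$, while the remaining integral over the Ford-circle arc, in the limit, evaluates by a standard Hankel-contour computation for $1/\Gamma$ to $M(c,n,\nu_j,0)=(\nu_j/n)^{1/2}I_1(4\pi\sqrt{n\nu_j}/c)$. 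Summing over $c\in C_{j0}$, then over the cusps $j=0,\dots,s-1$, and restoring the normalizing factor $2\pi$ from the arc-length parametrization, yields exactly \eqref{fourier}.

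The main obstacle is the convergence analysis. One must show that as the Farey order $N\to\infty$ the accumulated error integrals coming from the non-principal Fourier coefficients $a(n)^{(j)}$, $n\ge 0$, vanish, and that the resulting double series over $j$ and $c$ converges absolutely: here $I_1(4\pi\sqrt{n\nu_j}/c)\asymp c^{-1}$ for large $c$ and $|A(c,n,\nu_j)|\le \#D_c$, so absolute convergence is genuinely borderline. This is the step where Rademacher's refinement of Hardy--Ramanujan is needed rather than a mere asymptotic; it is handled by a careful matching of the major-arc radii to $N$ together with the trivial bound on the Kloosterman-type sums, and it is precisely the content of Lehner's analysis, so I would invoke \cite{L1} for it after the verification of hypotheses noted above.
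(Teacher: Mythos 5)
The paper offers no proof of this statement---it is quoted directly from Lehner \cite{L1}, with only the accompanying remark that Lehner's formula on page 314 actually carries an error term and that one must imitate Rademacher \cite{R2} to obtain the exact convergent series. Your circle-method sketch (Cauchy integral, Farey dissection into Ford-circle arcs, cusp bookkeeping via $A_j$ and the sets $C_{j0}$, $D_c$, the Bessel-function evaluation of the major-arc integrals, and the deferral of the convergence of the series over $c$ to Rademacher's refinement) is precisely the argument underlying that citation, so your approach matches the paper's.
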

In \cite{L1}, page 314, the formula for $a(n)$ has an error term. However, by imitating \cite{R2}, one can get a convergent series. (cf. \cite{D})

Now we apply the theorem to $f_m\in A_0^+(p,\chi_p)$ for $\chi_p(m)\ne -1$:
$$f_m=s(m)^{-1}q^{-m}+\sum_{n=0}^\infty a_m(n) q^n.
$$
Here $\Gamma_0(p)$ has two cusps; $p_0=\infty,\, p_1=0$. (\cite{Ko}, page 108)

First, $p_0=\infty$. In this case, $\lambda_0=1, \kappa_0=0$. If $c\in C_{00}$, then $p|c$, and the smallest $c\in C_{00}$ is $p$.
So
$$A(p,n,m)=\sum_{d\in D_p} v^{-1}(M)  e\left(\frac {nd-ma}p\right)=
\sum_{d=1}^{p-1} \chi_p(d)  e\left(\frac {nd-ma}p \right),
$$
where $ad\equiv 1$ (mod $p$). This is
 the Sali\'e sum $T(n,-m; p)$ (\cite{IK}, page 323):
If $p\nmid m$, then
$$
A(p,n,m)=T(n,-m; p)=\sqrt{p} \left(\frac {-m}p\right) \sum_{v^2\equiv -mn\, (\mathrm{mod} \,  p)} e\left(\frac {2v}p \right).
$$
Since $\chi_p(m)=1$, we have $A(p,n,m)=\sqrt{p} \displaystyle{\sum_{v^2\equiv -mn\, (\mathrm{mod} \, p)}} e\left(\frac {2v}p \right)$.
Note that if $p|n$, then we obtain $A(p,n,m)=\sqrt{p}$.
If $p|m$,
$$A(p,n,m)=\sum_{d\in D_p} \chi_p(d) e\left(\frac {nd}p \right)=
\begin{cases}  \chi_p(n) \sqrt{p}, &\text{if $p\nmid n$;}\\
0, &\text{if $p|n$}.\end{cases}
$$

Second, $p_1=0$. In this case, $\lambda_1=p, \kappa_1=0$, and $A_1=\begin{pmatrix} 0&-1\\1&0\end{pmatrix}$. The smallest $c\in C_{10}$ is 1. Thus we get $A(1,n,\nu_1)=1$.
In order to compute the Fourier expansion of $f_m$ at 0, we use \cite{BB}, page 54:
$$f_m \left (-\frac 1{pz} \right )=f_m|W_p(z)=\frac 1{\sqrt{p}} f|U_p (z)=\sqrt{p} \sum_{n\in\Bbb Z} a_m(pn) q^n.
$$
Hence
$$f_m \left (-\frac 1{z} \right )=\sqrt{p} \sum_{n\in\Bbb Z} a_m(pn) q^{\frac np}
=\begin{cases} \sqrt{p} \displaystyle\sum_{n=0}^\infty a_m(pn) q^{\frac np}, &\text{if $p\nmid m$};\\
\frac {\sqrt{p}}{s(m)} q^{-\frac {m}{p^2}}+\sqrt{p}\displaystyle \sum_{n=0}^\infty a_m(pn) q^{\frac np}, &\text{if $p|m$} .
\end{cases}
$$
So by (\ref{fourier}), we have, if $p\nmid m$,
\begin{eqnarray*}
a_m(n) &=& 2\pi \sum_{c\in C_{00}} \frac {A(c,n,m)}c \left (\frac mn \right )^{\frac 12} I_1 \left (\frac {4\pi\sqrt{nm}}c \right ) \\
&=& \frac {2\pi T(n,-m; p)}p \left (\frac mn \right )^{\frac 12} I_1 \left (\frac {4\pi\sqrt{nm}}p \right )+\text{error term}.
\end{eqnarray*}
If $p|m$,
\begin{eqnarray*}
a_m(n) &=& \pi \sum_{c\in C_{00}} \frac {A(c,n,m)}c \left (\frac mn \right )^{\frac 12} I_1 \left (\frac {4\pi\sqrt{nm}}c \right ) +\frac {\pi}{\sqrt{p}}\sum_{c\in C_{10}}  \frac {A(c,n,\frac m{p^2})}c
\left (\frac mn \right )^{\frac 12} I_1 \left (\frac {4\pi\sqrt{nm}}{pc} \right ) \\
&=& \frac {\pi}{\sqrt{p}} \left (\frac mn \right )^{\frac 12} I_1 \left (\frac {4\pi\sqrt{nm}}p \right )(\chi_p(n)+1)+\text{error term}.
\end{eqnarray*}
Note that by definition, if $\chi_p(n)=-1$, $a_m(n)=0$.

Now we show that the error term is smaller than the main term. In the case of $p|m$, the second term is similar to the first term. So it is enough to handle the case $p\nmid m$.
By Weil's bound,
$$|A(c,n,m)|\leq (c,n,m)^{\frac 12} c^{\frac 12}\tau(c)\leq (n,m)^{\frac 12} c^{\frac 12}\tau(c)\leq (mn)^{\frac 12}c^{\frac 12}\tau(c),
$$
where $\tau(c)$ is the number of positive divisors of $c$.
We divide the error term into two regions: $p<c\leq 4\pi\sqrt{mn}$ and $c> 4\pi\sqrt{mn}$.
Here
\begin{eqnarray*}
2\pi \sum_{p<c\leq 4\pi\sqrt{mn}} \frac {A(c,n,m)}c \left (\frac mn \right )^{\frac 12} I_1 \left (\frac {4\pi\sqrt{nm}}c \right )
\leq
2\pi m I_1 \left (\frac {2\pi\sqrt{nm}}p \right ) \sum_{p<c\leq 4\pi\sqrt{mn} } \frac {\tau(c)}{\sqrt{c}} \\
\leq 8 \pi^{\frac 32} m^{\frac 54}n^{\frac 14} (\log 4\pi\sqrt{mn} )\, I_1 \left (\frac {2\pi\sqrt{nm}}p \right ).
\end{eqnarray*}
Here we used the fact that $\sum_{c\leq x} \frac {\tau(c)}{\sqrt{c}}\leq 2\sqrt{x}\log x$.

On the other hand, since $I_1(z)\leq z$ for $0<z<1$,
\begin{eqnarray*}
2\pi \sum_{c> 4\pi\sqrt{mn}} \frac {A(c,n,m)}c  \left (\frac mn \right )^{\frac 12} \, I_1 \left (\frac {4\pi\sqrt{nm}}c \right )
\leq
8\pi^2 m^{\frac 32}n^{\frac 12} \sum_{c> 4\pi\sqrt{mn}}  \frac {\tau(c)}{c^{\frac 32}}
\leq 48 \pi^{\frac 32} m^{\frac 54}n^{\frac 14} (\log 4\pi\sqrt{mn}).
\end{eqnarray*}
Here we used the fact that $\sum_{c> x} \frac {\tau(c)}{c^{\frac 32}}\leq 12x^{-\frac 12}\log x$.

Combining the above computations, we have proved the following theorem:
\begin{Thm} \label{last} For a positive integer $m$ with $\chi_p(m)\ne -1$, let $f_m=s(m)^{-1}q^{-m}+\sum_{n=0}^\infty a_m(n)q^n\in A_0^+(p,\chi_p)$. Then for any $m$, $a_m(n)>0$ for all $n$, $p|n$.

If $p|m$, we have $a_m(n)\geq 0$ for all $n$, and 
$$a_m(n)=\frac {\pi}{\sqrt{p}} \left (\frac mn \right )^{\frac 12} \, I_1 \left (\frac {4\pi\sqrt{mn}}p \right ) (\chi_p(n)+1) + O\left(m^{\frac 54}n^{\frac 14} \log 4\pi\sqrt{mn} \, I_1 \left (\frac {2\pi\sqrt{nm}}p \right )\right).
$$
If $p\nmid m$, we obtain
$$a_m(n)=\frac {2\pi}{\sqrt{p}} \left (\frac mn \right )^{\frac 12} \, I_1 \left (\frac {4\pi\sqrt{mn}}p \right ) \left(\sum_{v^2\equiv -mn \, (\mathrm{mod} \, p)} e \left (\frac {2v}p \right )\right) + O\left(m^{\frac 54}n^{\frac 14} (\log 4\pi\sqrt{mn}) \, I_1 \left (\frac {2\pi\sqrt{nm}}p \right ) \right).
$$
\end{Thm}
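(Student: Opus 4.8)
The plan is to feed $f_m$ into the Hardy--Ramanujan--Rademacher expansion recalled above (Lehner's theorem), and then to peel off the single dominant Kloosterman-type term at each cusp while showing that everything else is of strictly smaller order. First I would record the cusp data for $\Gamma_0(p)$: there are exactly two cusps, $p_0=\infty$ with $\lambda_0=1,\kappa_0=0$, and $p_1=0$ with $\lambda_1=p,\kappa_1=0$ and scaling matrix $A_1=\begin{pmatrix}0&-1\\1&0\end{pmatrix}$. The principal part of $f_m$ at $\infty$ is $s(m)^{-1}q^{-m}$ by definition; for the behaviour at $0$ I would use the Fricke relation $f_m|W_p=p^{-1/2}f_m|U_p$ from \cite{BB}, which gives $f_m(-1/(pz))=\sqrt p\sum_n a_m(pn)q^n$. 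Hence $f_m$ has a nonzero principal part at $0$ precisely when $p\mid m$, equal to $\tfrac{\sqrt p}{s(m)}q^{-m/p^2}$ in the local variable. Plugging $\mu_0=m$ (and, when $p\mid m$, $\mu_1=m/p^2$) into Lehner's formula expresses $a_m(n)$ as a sum over the two cusps and over the moduli $c\in C_{j0}$.

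Next I would evaluate the arithmetic sums $A(c,n,\nu_j)$, keeping only the smallest modulus at each cusp. At $\infty$ the admissible $c$ are the multiples of $p$; for $c=p$ the multiplier is the character value $\chi_p(d)$, so $A(p,n,m)$ is the Sali\'e sum $T(n,-m;p)$, and I would quote its classical evaluation, simplified by the hypothesis $\chi_p(m)\neq-1$. This yields $A(p,n,m)=\sqrt p\sum_{v^2\equiv -mn}e(2v/p)$ when $p\nmid m$ (in particular $A(p,n,m)=\sqrt p>0$ whenever $p\mid n$), and $A(p,n,m)=\chi_p(n)\sqrt p$ or $0$ according as $p\nmid n$ or $p\mid n$ when $p\mid m$. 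At the cusp $0$ the smallest modulus is $c=1$, with $A(1,n,\nu_1)=1$. Inserting these together with $M(c,n,\nu_j,0)=(\nu_j/n)^{1/2}I_1(4\pi\sqrt{n\nu_j}/c)$ produces the main terms in the statement, the factor $\chi_p(n)+1$ in the case $p\mid m$ arising from adding the $\infty$- and $0$-contributions.

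The remaining and, I expect, hardest step is bounding the tail --- the contribution of all $c>p$ at $\infty$ (and $c>1$ at $0$) --- and checking it is of strictly smaller order than the main term. Using Weil's bound $|A(c,n,m)|\le (c,n,m)^{1/2}c^{1/2}\tau(c)\le (mn)^{1/2}c^{1/2}\tau(c)$, I would split the $c$-sum at $c=4\pi\sqrt{mn}$, apply $I_1(x)\le x$ for $0<x<1$ on the far tail and the growth of $I_1$ on the near tail, and invoke the elementary estimates $\sum_{c\le x}\tau(c)/\sqrt c\le 2\sqrt x\log x$ and $\sum_{c>x}\tau(c)/c^{3/2}\le 12x^{-1/2}\log x$. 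This gives a bound of the shape $O\!\big(m^{5/4}n^{1/4}\log(4\pi\sqrt{mn})\,I_1(2\pi\sqrt{nm}/p)\big)$, which is smaller than the main term since $I_1(x)\sim e^x/\sqrt{2\pi x}$ forces $I_1(2\pi\sqrt{nm}/p)/I_1(4\pi\sqrt{nm}/p)$ to decay faster than any power of $n$. One subtlety to flag: Lehner's formula as stated carries a conditionally convergent error, so to make the Rademacher-type series legitimate I would first reorganize it into an absolutely convergent series in the style of \cite{R2} (cf. \cite{D}). Once that is done, positivity of $a_m(n)$ for $p\mid n$, and nonnegativity when $p\mid m$ (using that $a_m(n)=0$ whenever $\chi_p(n)=-1$), follow because the main term is then a positive multiple of $I_1$ at a positive argument and dominates the error.
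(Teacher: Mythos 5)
Your proposal is correct and follows essentially the same route as the paper: the same two-cusp Lehner/Rademacher expansion with the Fricke relation $f_m|W_p=p^{-1/2}f_m|U_p$ to read off the principal part at $0$, the same Sali\'e-sum evaluation of $A(p,n,m)$, and the same Weil-bound tail estimate split at $c=4\pi\sqrt{mn}$ with the elementary divisor-sum bounds. Even your remark about reorganizing Lehner's conditionally convergent error term in the style of Rademacher matches the paper's own comment.
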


For example, let $p=5$, $m=6$, $n=9$. In this case, $\sum_{v^2\equiv -mn \,(\mathrm{mod}\, p)} e(\frac {2v}5)=2\cos \frac {4\pi}5$. So
$a_6(9)\sim \frac {2\pi}{\sqrt{5}} (6/9)^{\frac 12} I_1 (\frac {4\pi\sqrt{54}}5) 2 \cos \frac {4\pi}5=-35409600$. The exact value is $-35408776$.
Let $p=5$, $m=10$, $n=9$. In this case, $a_{10}(9)\sim \frac {2\pi}{\sqrt{5}} (10/9)^{1/2} I_1(\frac {4\pi\sqrt{90}}5)=5391530000$. The exact value is
5391558200.

\vskip 1 cm

\end{document}